\documentclass[a4paper]{amsart}
\usepackage{amsmath,amssymb,amsfonts}
\usepackage{mathrsfs,latexsym,amsthm,enumerate}
\usepackage{amscd}
\usepackage[all]{xy}

\renewcommand{\d}{\mathbf{d}}
\renewcommand{\r}{\mathbf{r}}

\newtheorem{theorem}{Theorem}[section]
\newtheorem{lemma}[theorem]{Lemma}
\newtheorem{corollary}[theorem]{Corollary}

\newtheorem{proposition}[theorem]{Proposition}

\title{The structure of generalized inverse semigroups}

\dedicatory{Dedicated to the memory of Professor John Mackintosh Howie (1936--2011)}

\author{Ganna Kudryavtseva}
\address{Ganna  Kudryavtseva, University of Ljubljana,
Faculty of Computer and Information Science, 
Tr\v{z}a\v{s}ka cesta 25,
SI-1001, Ljubljana,
 SLOVENIA}
\email{ganna.kudryavtseva\symbol{64}fri.uni-lj.si}
\author{Mark V. Lawson}
\address{Mark V.Lawson, Department of Mathematics
and the
  Maxwell Institute for Mathematical Sciences, 
Heriot-Watt University,
Riccarton,
Edinburgh~EH14~4AS,
SCOTLAND}
\email{markl@ma.hw.ac.uk}

\thanks{The first author was partially supported by an ARRS grant P1-0288, and the second by an EPSRC grant EP/I033203/1}
 
\begin{document}

\begin{abstract}
We prove that the structure of right generalized inverse semigroups is determined by free \'etale actions of inverse semigroups.
This leads to a tensor product interpretation of Yamada's classical struture theorem for generalized inverse semigroups.
\end{abstract}

\maketitle
\section{Introduction}

A {\em generalized inverse semigroup} is a regular semigroup whose set of idempotents forms a normal band.
Such semigroups play an unexpected role in inverse semigroup theory.
This was first observed in \cite{L1}, where they were used in constructing enlargements of inverse monoids,
and most recently in \cite{AL} where this construction was generalized 
to yield a procedure for manufacturing all inverse semigroups Morita equivalent to a given inverse semigroup.

Our interest in this class of semigroups arose from our efforts to find a common generalization of our papers
\cite{K} and \cite{L3,L5,LL} where we each obtained a different non-commutative generalization of classical Stone duality.
We plan to describe the results of this collaboration in later papers, whereas in this one we shall focus on some new results
on the structure of generalized inverse semigroups that arose from this work.

It may seem surprising that there is anything new to say about them.
That there is, is due to comparatively recent developments within inverse semigroup theory, 
in particular the theory of \'etale actions described in Section~3, 
combined with an interpretation of presheaves of sets over meet semilattices that seems to go back to the work of Wagner \cite{W} 
and is described in Section~2.
For the remainder of this section, we shall summarize some of the key results about generalized inverse semigroups
that we shall need.

Throughout this paper we shall call upon results on regular semigroups and we refer the reader to the standard reference \cite{H2}
for all undefined terms and unproved results concerning such semigroups.

An element $s$ of a semigroup $S$ is said to be {\em (von Neumann) regular} if there is an element $t$,
called an {\em inverse} of $s$, such that $s = sts$ and $t = tst$.
The set of inverses of the element $s$ is denoted by $V(s)$.
In an inverse semigroup $S$, we write the unique inverse of $s$ as $s^{-1}$ and we define
$\mathbf{d}(s) = s^{-1}s$ and $\mathbf{r}(s) = ss^{-1}$. 
On an orthodox semigroup $S$, the relation $\gamma$ defined by
$$s \, \gamma \, t \Leftrightarrow V(s) \cap V(t) \neq \emptyset \Leftrightarrow V(s) = V(t)$$ 
is the minimum inverse congruence.
As usual, we denote Green's relations on any semigroup by $\mathscr{L},\mathscr{R},\mathscr{H},\mathscr{D}$ and $\mathscr{J}$.
A regular semigroup is said to be {\em $\mathscr{R}$-unipotent} if each $\mathscr{R}$-class contains a unique idempotent.
The class of {\em $\mathscr{L}$-unipotent} semigroups is defined dually.
Venkatesan \cite{V} proved that $\mathscr{R}$-unipotent semigroups are orthodox with a 
left regular band of idempotents.
On a regular semigroup $S$, we may define a relation $a \leq b$ if and only if $a = eb = bf$ for some idempotents $e$ and $f$.
This is a partial order called the {\em natural partial order}.
If $S$ is a band, the order is the usual order on idempotents: $e \leq f$ if and only if $e = ef = fe$.
If $S$ is a semigroup and $e \in S$ an idempotent, the subsemigroup $eSe$ is called a {\em local submonoid}.
A semigroup is said to have a property {\em locally} if each local submonoid has it.
The properties of the natural partial order are a convenient way of organizing some of the classes of semigroups
studied in this paper. The two-sided results below are due to Nambooripad and may be found in \cite{H2}, 
and the one-sided results are due to Blyth and Gomes \cite{BG}

\begin{proposition} \mbox{}
\begin{enumerate}

\item The natural partial order is right compatible with the multiplication if and only if
the semigroup is locally $\mathscr{L}$-unipotent.

\item A band is $\mathscr{L}$-unipotent if and only if it is right regular, that is it satisfies the identity $efe = fe$, 
if and only if the $\mathscr{L}$-relation is equality.
On such a semigroup $\gamma = \mathscr{R}$.

\item The natural partial order is left compatible with the multiplication if and only if
the semigroup is locally $\mathscr{R}$-unipotent.

\item A band is $\mathscr{R}$-unipotent if and only if it is left regular, that is it satisfies the identity $efe = ef$, 
if and only if the $\mathscr{R}$-relation is equality.
On such a semigroup $\gamma = \mathscr{L}$.

\item The natural partial order is compatible with the multiplication 
if and only if the semigroup is locally inverse.

\item A band is locally inverse if and only if it is normal, that is it satisfies the identity $efgh = egfh$.

\item A band is right normal, that is it satisfies the identity $efg = feg$, if and only if it is normal and $\mathscr{L}$-unipotent.

\item A band is left normal, that is it satisfies the identity $efg = egf$, if and only if it is normal and $\mathscr{R}$-unipotent.

\end{enumerate}
\end{proposition}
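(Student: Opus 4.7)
The plan is to split the proposition into two groups: the band identities (items (2), (4), (6), (7), (8)) and the partial-order compatibility statements (items (1), (3), (5)). I would handle the band items first, since the identities they produce feed directly into the compatibility arguments.

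For (2), the core observation is that in any band $B$ the elements $fe$ and $efe$ are always $\mathscr{L}$-related: the computations $(ef)(fe)=effe=efe$ and $(fe)(efe)=fefe=(fe)^2=fe$ place each in the principal left ideal generated by the other. Hence $\mathscr{L}$-triviality forces the right regular identity $efe=fe$. Conversely, if $e\,\mathscr{L}\,f$ in a band then $ef=e$ and $fe=f$, so right regularity gives $e=e^2=efe=fe=f$. The equivalence of $\mathscr{L}$-unipotence with $\mathscr{L}$-triviality is immediate since every element of a band is idempotent. To identify $\gamma$ with $\mathscr{R}$ in an orthodox semigroup whose band of idempotents is $\mathscr{L}$-unipotent, I would use the standard description of the inverses of an element in an orthodox semigroup, together with uniqueness of idempotents in each $\mathscr{L}$-class, to show that two elements share an inverse precisely when they are $\mathscr{R}$-related. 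Item (4) is the left--right dual.

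For (6), the plan is to pass through the intermediate statement that a band is locally inverse iff every local submonoid $eBe$ is commutative (equivalently, a semilattice). Commutativity of every $eBe$ yields $efgh=egfh$ by sandwiching the four idempotents inside a suitable local submonoid and swapping the two middle factors; conversely the identity restricted to $eBe$ yields commutativity after a short manipulation. Items (7) and (8) then follow by combining (6) with (2) and (4): the identity $efg=feg$ is algebraically equivalent to the conjunction of normality and right regularity, and symmetrically $efg=egf$ corresponds to normality plus left regularity.

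For the compatibility items (1), (3), (5), the strategy is to localize. Given $a\leq b$ in $S$ and $s\in S$, the products $as$, $bs$ together with idempotent witnesses of $\leq$ can be pushed into a local submonoid $eSe$ for a suitable $e$, reducing right compatibility to an identity on $E(eSe)$ that, by (2), is precisely $\mathscr{L}$-unipotence of $eSe$. Items (3) and (5) then follow by duality and by conjunction. The main obstacle is the bidirectional localization argument in (1) and (3): one must produce idempotent witnesses of $a=eb=bf$ that are stable under right or left multiplication by arbitrary elements of $S$, and conversely show that failure of $\mathscr{L}$-unipotence at even one local submonoid produces a concrete counterexample to right compatibility. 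This is exactly where Nambooripad's sandwich-set calculus in the two-sided case, and the Blyth--Gomes refinement in the one-sided case, carry the real weight of the proof.
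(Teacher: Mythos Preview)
The paper does not prove this proposition at all: it is stated as a compilation of known results, attributed in the preceding sentence to Nambooripad (for the two-sided compatibility statements, via \cite{H2}) and to Blyth and Gomes \cite{BG} (for the one-sided statements). So there is no proof in the paper to compare against; your proposal is an attempt to supply what the paper deliberately outsources to the literature.

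As a proof sketch, your plan for the band identities (2), (4), (6), (7), (8) is sound and standard. One small slip: in the last clause of (2), ``On such a semigroup $\gamma = \mathscr{R}$'' refers to the band itself (where $\gamma$ is the minimum semilattice congruence), not to an orthodox semigroup whose idempotents form that band. Your proposed argument via inverses in an orthodox semigroup is therefore aiming at the wrong target; the intended claim is the simpler fact that on a right regular band $\mathscr{R}$ is already a semilattice congruence and any semilattice congruence must collapse each $\mathscr{R}$-class.

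For (1), (3), (5) you correctly identify that the substantive work lies in Nambooripad's sandwich-set machinery and the Blyth--Gomes refinement, and your plan effectively defers to those same sources. That is exactly what the paper does, so in this respect your ``proof'' and the paper's ``proof'' coincide: both point to the literature rather than carry out the localization argument in full. If you actually intend to write out (1) and (3), be aware that the forward direction (compatibility implies locally $\mathscr{L}$-unipotent) requires producing, from two $\mathscr{L}$-related idempotents in some $eSe$, a specific instance $a \leq b$ with $as \not\leq bs$; this is where the argument needs more than the vague ``localize'' you describe.
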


Generalized inverse semigroups were introduced by Yamada \cite{Y1} back in 1967.
They then seem to have become subsumed within the general theory of orthodox semigroups \cite{Hall,Y2,Y3,Y4}.
In particular, rather than orthodox semigroups with a normal band of idempotents
those with a regular band of idempotents were the main focus of attention
and their relationship with $\mathscr{L}$- and $\mathscr{R}$-unipotent semigroups.

In this paper, we really do need normal bands rather than the more general regular bands
and so we shall describe the theory given in \cite{Y3} restricted to this case.
To do so, we shall need two special classes of generalized inverse semigroups.
A {\em left generalized inverse semigroup} is a regular semigroup whose set of idempotents is a left normal band
and a {\em right generalized inverse semigroup} is a regular semigroup whose set of idempotents is a right normal band.
Yamada obtained two, related, structure theorems for generalized inverse semigroups.
In \cite{Y1}, he showed that generalized inverse semigroups could be described in terms of inverse semigroups and left and right normal bands,
whereas in \cite{Y3} the theory, restricted as indicated above,
shows how to describe them in terms of a left generalized inverse semigroup and a right generalized inverse semigroup.
The two theorems are closely related but it is the second that is of most interest to us now and we shall return to the first in Section~5.

Let $S$ be a generalized inverse semigroup.
Following \cite{Y3}, we define the relation $\lambda$ on $S$ 
by $\lambda = \gamma \cap \mathscr{L}$.
The relation $\rho$ is defined dually. 
  
\begin{proposition} Let $S$ be a generalized inverse semigroup.
Then $\lambda$ (respectively, $\rho$) is the minimum right (respectively, left) generalized inverse congruence on $S$.
It restricts to the $\mathscr{L}$-relation (respectively, $\mathscr{R}$-relation) on the set of idempotents
and is idempotent pure.
The intersection of $\lambda$ and $\rho$ is equality.
\end{proposition}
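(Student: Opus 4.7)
The plan is to handle each assertion in sequence, with $\rho$ following by duality: first the restriction to $E = E(S)$, then idempotent purity, then the congruence property and the structure of the quotient, then minimality, and finally the triviality of $\lambda \cap \rho$.

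The key preliminary is that $\gamma|_E = \mathscr{D}|_E$, whence $\lambda|_E = \mathscr{D}|_E \cap \mathscr{L}|_E = \mathscr{L}|_E$ at once. The inclusion $\mathscr{D}|_E \subseteq \gamma|_E$ is formal: $E/\gamma|_E$ is a subsemigroup of the inverse semigroup $S/\gamma$ consisting of idempotents, hence a semilattice, and $\mathscr{D}$ is the smallest band congruence with semilattice quotient. For the reverse I would first note that any inverse of an idempotent in an orthodox semigroup is itself idempotent: if $y \in V(e)$ then $y = yey = (ye)(ey)$ is a product of two band elements. Hence for $e \gamma f$ a common inverse $x$ may be chosen in $E$, and the identities $exe = e, xex = x$ (and the symmetric ones for $f$) force $e, x, f$ to share a $\mathscr{D}$-class in the strong semilattice decomposition of the normal band. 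Idempotent purity of $\lambda$ then follows from that of $\gamma$: if $a \gamma e$ for an idempotent $e$, any $a' \in V(a) = V(e)$ is idempotent, and $a \in V(a')$ is therefore also idempotent.

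The main technical obstacle is to show that $\lambda$ is a two-sided congruence. Right compatibility is automatic, since $\gamma$ is a congruence and $\mathscr{L}$ is a right congruence. For left compatibility, given $a \lambda b$ and $d \in S$, I would fix $c \in V(a) \cap V(b)$ and extract from $a \mathscr{L} b$ together with $aca = a, bcb = b$ the identities $acb = a$ and $bca = b$. The aim is then to produce elements of $S^1$ witnessing $S^1 da = S^1 db$, and I expect this calculation to take place largely inside the normal band $E$, combining the identities above with $efgh = egfh$ applied to products built from the idempotents $ac, bc, ca, cb$ and an inverse $d' \in V(d)$. This is where normality is indispensable; already when $S$ is itself a normal band, left compatibility of $\mathscr{L} = \lambda$ reduces to a direct application of $efgh = egfh$. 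Once $\lambda$ is known to be a congruence, $S/\lambda$ is regular; combining surjectivity of the quotient with idempotent purity of $\lambda$ yields $E(S/\lambda) \cong E/\mathscr{L}|_E$, a quotient of a normal band by $\mathscr{L}$, which is right normal by Proposition~1.1(7). Hence $S/\lambda$ is a right generalized inverse semigroup.

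For minimality, let $\tau$ be any congruence with $S/\tau$ a right generalized inverse semigroup, and $\pi_\tau$ the quotient. If $a \lambda b$, the images share the inverse $\pi_\tau(c)$ and are $\mathscr{L}$-related. Since $S/\tau$ is $\mathscr{L}$-unipotent, its minimum inverse congruence coincides with $\mathscr{R}$ by Proposition~1.1(2), so sharing an inverse forces $\pi_\tau(a) \mathscr{R} \pi_\tau(b)$, giving $\pi_\tau(a) \mathscr{H} \pi_\tau(b)$ together with a common inverse. The equality $\pi_\tau(a) = \pi_\tau(b)$ then follows from the final step, which establishes $\lambda \cap \rho = \gamma \cap \mathscr{L} \cap \mathscr{R} = \gamma \cap \mathscr{H} = \Delta$ in any orthodox semigroup. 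Given $a \gamma b$, $a \mathscr{H} b$ and $c \in V(a) \cap V(b)$, right compatibility of $\mathscr{L}$ yields $ac \mathscr{L} bc$, while the chain $ac \mathscr{R} a \mathscr{R} b \mathscr{R} bc$ yields $ac \mathscr{R} bc$; hence $ac$ and $bc$ are $\mathscr{H}$-related idempotents, forcing $ac = bc$ since the $\mathscr{H}$-class of any idempotent is a group with a unique idempotent. Dually $ca = cb$, and therefore $a = aca = a(cb) = (ac)b = (bc)b = b$.
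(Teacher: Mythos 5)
Your overall architecture is sound, and in places you prove more than the paper does: the paper's own proof never explicitly verifies the restriction-to-$\mathscr{L}$ claim, idempotent purity, or the fact that $\gamma \cap \mathscr{H}$ is equality on an orthodox semigroup, all of which you handle correctly (your identification $\gamma|_E = \mathscr{D}|_E$ via the semilattice quotient is a clean way to get $\lambda|_E = \mathscr{L}|_E$). The minimality argument is also essentially the same in spirit as the paper's, though note that Proposition~1.1(2) as stated concerns bands, so the claim that a common inverse forces $\mathscr{R}$-equivalence in an $\mathscr{L}$-unipotent regular semigroup needs its own one-line argument ($dc = d$ being impossible to cite directly; rather, $ac \mathscr{L} c \mathscr{L} bc$ gives $ac = bc$ by $\mathscr{L}$-unipotence, whence $a \mathscr{R} ac = bc \mathscr{R} b$).

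The genuine gap is the left compatibility of $\lambda$, which is the technical heart of the whole proposition and which you explicitly defer: ``I expect this calculation to take place largely inside the normal band.'' Nothing in the surrounding discussion substitutes for actually exhibiting the witnesses, and the statement is false without normality (for general orthodox semigroups $\gamma \cap \mathscr{L}$ need not be a left congruence), so the computation cannot be waved through. For the record, your chosen ingredients do suffice: with $a \, \lambda \, b$, $c \in V(a) \cap V(b)$, $d \in S$ and $d' \in V(d)$, one checks that $(dbcd')\,da = db\,(cd'd)\,a = d\,(d'd)(bc)\,a = d(bc)a = db$, the middle step using normality of $E(S)$ (after padding $d = dd'd$ and $a = (ac)a$ so that the rearranged factors sit inside a product of four idempotents) and the last step using $b\,\mathscr{L}\,a\,\mathscr{L}\,ca$. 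Together with the symmetric identity $da = (dacd')\,db$ this gives $da \, \mathscr{L}\, db$, which is exactly the paper's calculation. Until that computation (or an equivalent one) is written down, the proof is incomplete at its most essential point.
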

\begin{proof} We prove first that $\lambda$ is a congruence.
It is clearly a right congruence and so it only remains to prove that it is a left congruence.
Suppose that $a \, \lambda \, b$ and $c \in S$ is arbitrary.
Then $a \, \gamma \, b$ and $a \mathscr{L} b$.
Clearly, $ca \, \gamma \, cb$.
It therefore only remains to prove that $ca \, \mathscr{L} \, cb$.
Let $a' \in V(a)$, $b' \in V(b)$ and $c' \in V(c)$.
Since  $a \mathscr{L} b$ we have that $ab'b = a$ and $ba'a = b$.
Now observe that
$$(cba'c')ca = cba'c'ca = c (ba')(c'c) a = c(c'c)(ba')a = cb$$ 
using the fact that $ba'$ is an idempotent and that we are working in a generalized inverse semigroup.
We have therefore proved that 
$$cb = (cba'c')ca.$$ 
We may similarly show that $ca = (cab'c')cb$.
It follows that $ca \mathscr{L} cb$, as required.

We next show that $S/\lambda$ is a right generalized inverse semigroup.
Denote the $\lambda$-congruence class containing $s$ by $[s]$.
Let $e,f \in E(S)$.
Then $[e] \mathscr{L} [f]$ implies that $[ef] = [e]$ and $[fe] = [f]$.
It follows that $e = efe$, $f = fef$, $ef \mathscr{L} e$ and $fe \mathscr{L} f$.
It now readily follows that $e \mathscr{L} f$ and so $[e] = [f]$, as required. 

It only remains to prove that $\lambda$ is the {\em minimum} right generalized inverse congruence on $S$.
Let $\sigma$ be any right generalized inverse congruence on $S$ and $a\, \lambda \, b$.
Let $a' \in V(a)$ and $b' \in V(b)$.
Since $\lambda$ is a congruence, it follows that $aa'\, \lambda \, ba'$ and $a'a \, \lambda \, a'b$.
But $aa'\, \mathscr{L} \, ba'$ and $a'a \, \mathscr{L} \, a'b$
and both $ba'$ and $a'b$ are idempotents. 
It follows that $\sigma (aa') = \sigma (ba')$ and $\sigma (a'a) = \sigma (a'b)$.
Hence $\sigma (a) = \sigma (b) \sigma (a'a)$ and $\sigma (a) = \sigma (aa') \sigma (b)$.
This implies that $\sigma (a) \leq \sigma (b)$.
The reverse inequality follows by symmetry.

The proof of the final assertion follows from the fact that on an orthodox semigroup $\gamma \cap \mathscr{H}$ is the equality relation.
\end{proof}

If $S$ is a generalized inverse semigroup then the map
$S \rightarrow S/\rho \times S/\lambda$ given by $s \mapsto (\rho (s),\lambda (s))$ is
an injective homomorphism 
since in any orthodox semigroup, we have that $\gamma \cap \mathscr{H}$ is just equality.
The result on generalized inverse semigroups below now follows by the above and Theorem~3 of \cite{Y3}.

\begin{theorem} A regular semigroup is a generalized inverse semigroup 
if and only if it is a subdirect product of a left generalized inverse semigroup
by a right generalized inverse semigroup.
\end{theorem}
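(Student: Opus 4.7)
The plan is to prove the two implications separately, drawing on the previous Proposition (and its dual) for the forward direction, and using the fact that normality is an identity preserved by products and subsemigroups for the reverse.

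For the forward direction, I would begin by assuming $S$ is a generalized inverse semigroup. By the previous Proposition together with its left-right dual, $\lambda$ and $\rho$ are congruences with $S/\lambda$ a right generalized inverse semigroup and $S/\rho$ a left generalized inverse semigroup, and the Proposition further asserts $\lambda \cap \rho$ is the equality relation. Therefore the natural homomorphism $\Phi \colon S \to S/\rho \times S/\lambda$, $s \mapsto (\rho(s), \lambda(s))$, is injective. The composition of $\Phi$ with each coordinate projection is simply the corresponding quotient map, which is surjective, so $\Phi(S)$ is a subdirect product by definition.

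For the reverse direction, suppose $S$ is regular and sits inside $L \times R$ as a subdirect product, where $L$ is a left generalized inverse semigroup and $R$ is a right generalized inverse semigroup. It suffices to show the idempotents of $S$ form a normal band. By Proposition~1.1, $E(L)$ is left normal and $E(R)$ is right normal, and each of these identities implies the normal identity $efgh = egfh$. Since this identity is preserved under direct products, $E(L) \times E(R)$ is a normal band; and since it is an identity, it is inherited by subsemigroups. As $E(L \times R) = E(L) \times E(R)$, the set $E(S) = S \cap (E(L) \times E(R))$ is a subsemigroup of a normal band and hence itself a normal band. Together with regularity, this gives that $S$ is a generalized inverse semigroup.

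The argument is essentially a bookkeeping exercise on top of the previous Proposition, so no step poses a genuine obstacle. The subtlest point is invoking the left-right dual of the previous Proposition to handle $\rho$ and $S/\rho$ simultaneously with the statement just proved for $\lambda$ and $S/\lambda$; everything else reduces to the observation that ``normal band'' is an equational class containing both left normal and right normal bands and hence closed under subdirect products.
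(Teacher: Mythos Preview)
Your proposal is correct and, for the forward direction, identical to the paper's: both use the injectivity of $s \mapsto (\rho(s),\lambda(s))$ established in the preceding Proposition to exhibit $S$ as a subdirect product of $S/\rho$ and $S/\lambda$.

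For the converse the paper simply appeals to Theorem~3 of Yamada \cite{Y3}, whereas you give a direct elementary argument: left normal and right normal bands are both normal (Proposition~1.1(7),(8)), normality is an identity so it passes to $E(L)\times E(R)$ and then to the subsemigroup $E(S)=S\cap(E(L)\times E(R))$. This is a genuine, if modest, improvement in self-containment over the paper's citation; the only point worth making explicit is that $E(S)$ really is a \emph{sub}semigroup of $E(L)\times E(R)$ (equivalently, that $S$ is orthodox), which follows because the product of two idempotents of $S$ lies in $S$ and in the band $E(L)\times E(R)$, hence is idempotent.
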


For right generalized inverse semigroups there is even an analogue of symmetric inverse monoids due to Madhavan \cite{Mad}
which we briefly describe.
 In this description, we write functions to the right of their arguments.
Let $X$ be a non-empty set and $\rho$ an equivalence relation defined on $X$.
The set $M_{\rho}(X)$ consists of all the partial functions $\alpha$ of $X$ that satisfy three additional conditions with respect
to the equivalence relation $\rho$.
First, if $x \, \rho \, y$ then $(x)\alpha = (y)\alpha$;
second, if $(x)\alpha  \, \rho \, (y)\alpha$ then $x \, \rho \, y$;
and third, if $(x)\alpha$ is defined and $x \, \rho \, y$ then $(y)\alpha$ is defined.
The semigroup $M_{\rho}(X)$ is called the {\em symmetric right generalized inverse semigroup}
and every right generalized inverse semigroup can be embedded in such a semigroup.
Observe that in this semigroup, the idempotents are the partial functions $\alpha$ such that $x \, \rho \, (x)\alpha$ for all 
$x \in \mbox{dom}(\alpha)$.
It follows that if $\rho$ is chosen to be the equality relation then $M_{\rho}(X)$ is just the symmetric inverse monoid on $X$.\\

\noindent
{\bf Encomium }John Howie's two books \cite{H1,H2} are an indispensible reference for anyone wanting to learn the fundamentals of semigroup theory
and for all researchers in regular semigroups.
The second author first encountered semigroup theory through \cite{H1}, and part of his mathematical apprenticeship was attending the
semigroup meetings organized on a regular basis by John at St.~Andrews.

 \section{Presheaves of sets from a semigroup perspective}
 
The key idea that lies behind the work of this paper can be traced back to Wagner \cite{W}, and forms the subject of this section.
We begin with a definition.
Let $E$ be a meet semilattice equipped with the following additional data.
For each $e \in E$, let $X_{e}$ be a set
where we assume that if $e \neq f$ then $X_{e}$ and $X_{f}$ are disjoint.
If $e \geq f$ then a function $|_{f}^{e} \colon X_{e} \rightarrow X_{f}$ is given where $x \mapsto x|_{f}^{e}$.
We call these {\em restriction functions}.
In addition, $|_{e}^{e}$ is the identity on $X_{e}$ and if $e \geq f \geq g$ then 
$$(x|_{f}^{e})|_{g}^{f} = x|_{g}^{e}.$$
Put $X = \bigcup_{e \in E} X_{e}$ and define $p \colon X \rightarrow E$ by $p(x) = e$ if $x \in X_{e}$.
We shall say that $X = (X,p)$ is a {\em presheaf of sets over $E$}.
We will sometimes denote this presheaf by $X \stackrel{p}{\rightarrow} E$.
Observe that we do not assume that the sets $X_{e}$ are non-empty.
If they are all non-empty we denote the presheaf of sets by 
$X \stackrel{p}{\twoheadrightarrow} E$ and say that the presheaf has {\em global support.}

Presheaves of sets play a fundamental role, of course, in topos theory
and are usually viewed from an order-theoretic perspective.
Within semigroup theory, they are the basis of `strong semilattices of structures'.
We shall now describe a third way of thinking about them which forms the basis of this paper.

Our main idea is that presheaves of sets over semilattices can also be viewed as purely {\em algebraic} structures in the following way.
Let  $X \stackrel{p}{\rightarrow} E$ be a presheaf of sets.
Define a binary operation $\circ $ on $X$ as follows
$$x \circ y = y \vert^{p(y)}_{p(x) \wedge p(y)}.$$
It is routine to check that $(X,\circ)$ is a right normal band.
Thus from each presheaf of sets over a meet semilattice --- an order-theoretic structure --- 
we can construct a right normal band --- an algebraic structure.
In the case where the presheaf has global support, we can also go in the opposite direction.

\begin{theorem}\label{th: presheaves_right_normal_bands}
The category of presheaves of sets with global support is equivalent to the category of right normal bands.
\end{theorem}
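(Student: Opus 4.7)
The plan is to construct a quasi-inverse functor $G$ from right normal bands to presheaves of sets with global support and check that $G$ and the functor $F$ implicit in the statement (acting on objects by $(X,p) \mapsto (X,\circ)$) are mutually inverse equivalences. First I fix the notion of morphism: a morphism of presheaves $(X,p) \to (X',p')$ is a pair $(\psi, \{\psi_e\}_{e \in E})$ consisting of a meet semilattice homomorphism $\psi \colon E \to E'$ together with set maps $\psi_e \colon X_e \to X'_{\psi(e)}$ that commute with the restriction functions, and morphisms of right normal bands are ordinary semigroup homomorphisms. The functor $F$ is then completed by sending a presheaf morphism to the map $x \mapsto \psi_{p(x)}(x)$, which is a band homomorphism by the compatibility of the $\psi_e$'s with restrictions.

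To build $G$, start with a right normal band $B$. By Proposition 1.1, $B$ is normal and $\mathscr{L}$-unipotent, so $\mathscr{L}$ is equality, $\mathscr{R} = \mathscr{D}$ is a congruence, and $E := B/\mathscr{R}$ is a meet semilattice; equivalently $B$ is a strong semilattice of right-zero semigroups. I take the $\mathscr{R}$-classes as the fibres $X_e$ (non-empty by construction, hence global support) and define the restriction by $x|_f^e := yx$ for $e \geq f$ and any $y \in X_f$. Independence of the choice of $y \in X_f$ comes from the identity $efg = feg$ combined with $y y' = y'$ and $y' y = y$ inside the right-zero class $X_f$, and transitivity $(x|_f^e)|_g^f = x|_g^e$ is similar. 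On a band homomorphism $\phi \colon B \to B'$, $G(\phi)$ consists of the induced map $B/\mathscr{R} \to B'/\mathscr{R}$ together with the restrictions of $\phi$ to each $\mathscr{R}$-class; the presheaf-morphism axioms hold automatically.

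For $FG \cong \mathrm{id}$, starting from a right normal band $B$ the new product is $x \circ y = y|_{p(x) \wedge p(y)}^{p(y)} = zy$ for any $z \in X_{p(x) \wedge p(y)}$, and this equals $xy$ because in a right normal band $xy$ depends only on the $\mathscr{R}$-class of $x$ (apply $efg = feg$ to $x$, $z$, $y$ and use $xz = z$, $zx = x$). For $GF \cong \mathrm{id}$, the main point is to identify the $\mathscr{R}$-classes of $(X,\circ)$ with the non-empty fibres of $p$: if $x = y \circ u$ then $p(x) \leq p(y)$, and conversely $y \circ x = x$ whenever $p(y) \geq p(x)$, so $x \mathscr{R} y$ iff $p(x) = p(y)$. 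Global support is precisely what is needed so that every $e \in E$ is realised as some $\mathscr{R}$-class and the recovered semilattice is canonically $E$ rather than the sub-semilattice $p(X)$; the recovered restrictions $y \circ x = x|_{p(y) \wedge p(x)}^{p(x)}$ then agree with the originals.

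The main obstacle I anticipate is the morphism-level bookkeeping, in particular verifying that the definition of $G$ on morphisms really produces a presheaf morphism free of hidden choices, and that the unit and counit of the equivalence are natural transformations. Once the objects are matched these reduce to routine calculations, and it is precisely the collapse of empty fibres to nothing in the band that forces the global support hypothesis: without it, non-isomorphic presheaves differing only on empty fibres would induce isomorphic bands, breaking essential surjectivity.
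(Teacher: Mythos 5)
Your construction is essentially the paper's: the same pair of functors (a presheaf goes to $(X,\circ)$ with $x \circ y = y\vert^{p(y)}_{p(x)\wedge p(y)}$, and a right normal band $B$ goes to a presheaf over $B/\mathscr{R}$ with restriction given by left multiplication by any element of the target fibre), the same role for global support, and the same iteration checks, so the proposal is correct in outline and matches the paper's proof. One local justification is broken, although the step it supports is true: in verifying $x\circ y = zy = xy$ for $z \in X_{p(x)\wedge p(y)}$ you invoke $zx = x$, but $zx$ lies in the fibre over $p(x)\wedge p(y)$, not over $p(x)$, so $zx = x$ fails whenever $p(x)\not\leq p(y)$ (already in the two-element semilattice with $x=1$ and $y=z=0$ one has $zx = 0 \neq 1 = x$); similarly the principle that $xy$ depends only on the $\mathscr{R}$-class of $x$ is correct but inapplicable, since $z$ is not $\mathscr{R}$-related to $x$. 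The repair is one line and is exactly the paper's computation: since you have already shown that $zy$ is independent of the choice of $z$ in the fibre $X_{p(x)\wedge p(y)}$, take $z = xy$, which lies in that fibre, and conclude $zy = (xy)y = xy$.
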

\begin{proof} The category of right normal bands, $\mathcal{NB}$, has as objects the right normal bands and as morphisms
semigroup homomorphisms.
The category of presheaves $\mathcal{PS}$ has as objects presheaves of sets over meet semilattices with global support
$X \stackrel{p}{\twoheadrightarrow} E$.
If $(X,p)$ and $(Y,q)$ are two objects then a morphism from $(X,p)$ to $(Y,q)$
is a pair of functions $(\alpha,\beta)$ where $\alpha \colon X \rightarrow Y$ is a function,
$\beta \colon E \rightarrow F$ is a semilattice homomorphism, $q \alpha = \beta p$
and $\alpha (x \vert^{e}_{f}) = \alpha (x)\vert^{\beta (e)}_{\beta (f)}$ when $f \leq e$ and $p(x) = e$.

We define a functor from $\mathcal{NB}$ to $\mathcal{PS}$. 
Let $B$ be a right normal band and put $E = B/\mathscr{R}$, a semilattice since $\mathscr{R}$ is the minimum
semilattice congruence on $B$.
Define $p \colon B \rightarrow E$ by $p(e) = [e]$, the $\mathscr{R}$-class containing $e$.
This is clearly surjective.
If $[f] \leq [e]$ define the map from $R_{[e]}$ to $R_{[f]}$ to be $x \mapsto fx$. 
Then this defines on $B$ the structure of a presheaf of sets over $E$ such that
the multiplication induced by the presheaf structure coincides with the original multiplication on $B$.
We prove that the definition of the structure mapping is independent of the choice of idempotent $f$.
Suppose that $f \mathcal{R} f'$.
Then $f' = ff'$ and $f = f'f$.
We need to prove that $fx = f'x$.
But $fx = f'fx = ff'x = f'x$, using the fact that $B$ is a right normal band.
The proof of the remaining claims is now straightforward.
Now let $\theta \colon B_{1} \rightarrow B_{2}$ be a homomorphism between two right normal bands.
Such a homomorphism preserves the $\mathscr{R}$-relation.
It follows that we may define $\theta' \colon B_{1}/\mathscr{R} \rightarrow B_{2}/\mathscr{R}$
by $\theta'([b_{1}]) = [\theta (b_{1})]$.  
Thus $(\theta,\theta')$ is a morphism from $(B_{1},p_{1})$ to $(B_{2},p_{2})$.

We now define a functor from $\mathcal{PS}$ to $\mathcal{NB}$.
Let $X \stackrel{p}{\twoheadrightarrow} E$ be a presheaf with global support.
We have already seen how to construct a right normal band $X^{\circ} = (X,\circ)$.
Let $(\alpha,\beta) \colon (X,p) \rightarrow (Y,q)$ be a morphism of presheaves.
We shall define a homomorphism $\theta \colon X^{\circ} \rightarrow Y^{\circ}$
by $\theta (x) = \alpha (x)$.
It is routine to check that this is a semigroup homomorphism.

We now iterate the two constructions in order to check that we have an equivalence of categories.
Let $B$ be a right normal band whose multiplication is denoted by concatenation.
We have that
$$x \circ y = y\vert^{p(y)}_{p(x)p(y)} = y\vert^{[y]}_{[x][y]} = y \vert^{[y]}_{[xy]} = (xy)y = xy,$$
as required.
It's clear that a map between two right normal bands is also returned to itself under iteration of
the two functors.

Let $(X,p)$ be a presheaf of sets.
Observe that in the semigroup $X^{\circ}$ we have that $x \, \mathscr{R} \, y$
if and only if $p(x) = p(y)$.
It follows that the presheaf constructed from $X^{\circ}$ is isomorphic to $(X,p)$.
It's clear that a map between two presheaves is returned to an isomorphic copy 
under iteration of the two functors.
\end{proof}

Thus by using right normal bands, the notion of a presheaf of sets is made algebraic.
Our theorem is a variant of the classical structure theorem for normal bands 
which describes them as strong semilattices of right zero semigroups.
The connection between them is made using the observation that every non-empty set 
$A$ may be turned into a right zero semigroup by defining $ab = b$ for all $a,b \in A$.
However, the change of perspective represented by our theorem is important in this paper.

We now define two relations on presheaves of sets over semilattices and then explore some of their properties.
Define the relation $\leq$ on $X$ by $x \leq y$ if and only if $x = y\vert^{p(y)}_{p(x)}$.
This is a partial order.
Define $x \sim y$, and say that $x$ and $y$ are {\em compatible}, if and only if $\exists x \wedge y$ and $p(x \wedge y) = p(x) \wedge p(y)$.

\begin{lemma}\label{le: order_compatibility} Let $X \stackrel{p}{\rightarrow} E$ be a presheaf of sets.
\begin{enumerate}

\item If $x,y \leq z$ then $x \sim y$.

\item If $x \sim y$ and $p(x) \leq p(y)$ then $x \leq y$.

\item  $x \leq y$ if and only if $x = x \circ y$.

\item $x \sim y$ if and only if $x \circ y = y \circ x$.

\end{enumerate}
\end{lemma}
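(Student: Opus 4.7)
The plan is to attack all four parts by reducing everything to the defining relation $x \leq y \Leftrightarrow x = y\vert^{p(y)}_{p(x)}$ together with the functoriality of restriction. Before tackling the numbered assertions, I would first record the natural candidate construction for meets: if $x, y \leq z$ then the element
$$w := z\vert^{p(z)}_{p(x)\wedge p(y)}$$
is well-defined since $p(x)\wedge p(y)\le p(z)$, and the identity $(z\vert^{p(z)}_{p(x)})\vert^{p(x)}_{p(x)\wedge p(y)} = z\vert^{p(z)}_{p(x)\wedge p(y)}$ from the presheaf axiom shows $w = x\vert^{p(x)}_{p(x)\wedge p(y)}$, so $w\le x$, and symmetrically $w\le y$.

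For (1), I would then verify that $w$ above is in fact the meet $x \wedge y$: given any $u \le x$ and $u \le y$, functoriality yields $u = z\vert^{p(z)}_{p(u)} = w\vert^{p(x)\wedge p(y)}_{p(u)}$, so $u \le w$. Since $p(w) = p(x)\wedge p(y)$ by construction, this gives $x \sim y$. For (2), if $x \sim y$ and $p(x) \le p(y)$, then $w := x\wedge y$ satisfies $w \le x$ with $p(w) = p(x)$, which forces $w = x\vert^{p(x)}_{p(x)} = x$; hence $x = w \le y$. For (3), one direction is immediate from the definition of $\circ$ (if $x \le y$ then $p(x)\wedge p(y) = p(x)$, and $x\circ y = y\vert^{p(y)}_{p(x)} = x$), while the converse reads off both $p(x) \le p(y)$ and $x = y\vert^{p(y)}_{p(x)}$ directly from the equation $x = y\vert^{p(y)}_{p(x)\wedge p(y)}$.

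The main work is in (4). In the forward direction, assume $x \sim y$ and let $w = x \wedge y$, so $p(w) = p(x)\wedge p(y)$. Because $w \le x$ and $w \le y$, functoriality gives $w = x\vert^{p(x)}_{p(x)\wedge p(y)} = y\circ x$ and $w = y\vert^{p(y)}_{p(x)\wedge p(y)} = x\circ y$, so the two products agree. For the converse, set $w = x\circ y = y\circ x$; then $p(w) = p(x)\wedge p(y)$, and the two expressions for $w$ show $w \le y$ and $w \le x$, respectively. The potential obstacle here is that commutativity of $\circ$ does not a priori give a greatest lower bound, only a common lower bound. I would overcome this by repeating the argument from (1): any $u$ with $u \le x$ and $u \le y$ satisfies $p(u) \le p(x)\wedge p(y) = p(w)$, and then $u = x\vert^{p(x)}_{p(u)} = (x\vert^{p(x)}_{p(w)})\vert^{p(w)}_{p(u)} = w\vert^{p(w)}_{p(u)}$, so $u \le w$. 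Thus $w = x\wedge y$ with the correct $p$-value, i.e.\ $x \sim y$, completing the lemma.
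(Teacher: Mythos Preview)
Your proof is correct and follows essentially the same approach as the paper's: both arguments build the candidate meet via restriction of a common upper bound and then verify the four parts by unwinding the definitions of $\leq$, $\sim$, and $\circ$. The only cosmetic difference is in the converse of (4), where the paper derives $u \leq x \circ y$ from $u = u \circ x = u \circ y$ together with associativity of $\circ$ and part~(3), whereas you compute directly with the restriction maps; the content is the same.
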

\begin{proof}
(1). By definition $x = z \vert^{p(z)}_{p(x)}$ and $y = z \vert^{p(z)}_{p(y)}$.
Put $u = z \vert^{p(z)}_{p(x) \wedge p(y)}$.
Then it is easy to check that $u = x \wedge y$ and $p(u) = p(x) \wedge p(y)$.

(2). By assumption, we have that $p(x \wedge y) = p(x)$.
It follows that $x = x \wedge y$ and so $x \leq y$.

(3). Let $x \leq y$.
Then $x = y \vert^{p(y)}_{p(x)}$.
It follows that $x = x \circ y$.
Conversely, let $x = x \circ y$.
Then $x = y \vert^{p(y)}_{p(x) \wedge p(y)}$
and so $x \leq y$.

(4). Suppose that $x \sim y$.
By definition $x \circ y = y \vert^{p(y)}_{p(x) \wedge p(y)}$.
Observe that $x \circ y \leq y$, $x \wedge y \leq y$ and
$p(x \circ y) = p(x \wedge y)$.
It follows that $x \circ y = x \wedge y$.
By symmetry we get that $y \circ x = x \wedge y$.
It follows that $x \circ y = y \circ x$.

Conversely, suppose that $x \circ y = y \circ x$.
Observe that $x \circ y \leq y$ and $y \circ x \leq x$.
It follows that $z = x \circ y = y \circ x \leq x,y$.
Let $u \leq x,y$.
Then $u = u \circ x$ and $u = u \circ y$.
It follows that $u = u \circ (x \circ y)$ and so $u \leq x \circ y$.
We have proved that $x \wedge y = x \circ y = y \circ x$.
It is immediate that $p(x \wedge y) = p(x) \wedge p(y)$.
\end{proof}

The following will be applied in Section~5.

\begin{corollary}\label{cor: order}
Let $X \stackrel{p}{\rightarrow} E$ be a presheaf of sets.
If $x,y \leq z$ and $p(x) \leq p(y)$ then $x \leq y$.
\end{corollary}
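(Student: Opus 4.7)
The plan is to obtain this corollary as a direct consequence of parts (1) and (2) of Lemma~\ref{le: order_compatibility}, which together package exactly the two implications we need. Since both $x$ and $y$ lie below the common element $z$, part (1) of the lemma gives us compatibility $x \sim y$ for free. Once compatibility is in hand, the hypothesis $p(x) \leq p(y)$ on the base semilattice is precisely what part (2) requires in order to upgrade compatibility to the inequality $x \leq y$ on the presheaf. So the proof is essentially two lines, invoking (1) then (2).

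The only thing worth checking is that the lemma's statements apply in the form in which we need them; there is no hidden asymmetry, and nothing depends on global support (which is not assumed here). There is no real obstacle to surmount: the entire content of the corollary has already been absorbed into the lemma, and this result is really just a convenient repackaging for the application in Section~5. I would write it simply as: by (1), $x \sim y$; by (2), $x \leq y$.
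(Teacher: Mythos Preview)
Your proposal is correct and matches the paper's intended argument: the corollary is stated immediately after Lemma~\ref{le: order_compatibility} with no explicit proof, precisely because it follows at once from parts (1) and (2) as you describe.
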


\section{\'{E}tale actions}

One of the central developments in inverse semigroup theory in recent years has been the recognition of the important role played
in the theory by what are called \'etale actions of inverse semigroups.
These were first explicitly defined in \cite{FS} and applied to the study of the Morita theory of inverse semigroups in \cite{S}.
But their origin lies in the cohomology theory of inverse semigroups \cite{Lausch}.
Proposition~\ref{prop: etale_actions} below goes back to \cite{Log} and \cite{LS}.

Let $S$ be an inverse semigroup and $X$ a non-empty set. 
A {\em left $S$-action} of $S$ on $X$ is a function $S\times X\to X$, defined by $(s,x)\mapsto s\cdot x$ (or $sx$),
such that $(st)x=s(tx)$ for all $s,t\in S$ and $x\in X$. {\em Right actions} are defined dually.  
If $S$ acts on $X$ we say that $X$ is an {\em $S$-set}.
In this paper, all actions will be assumed left actions unless stated otherwise.
A {\em left \'{e}tale action} $(S,X,p)$ of $S$ on $X$ is defined as follows \cite{FS,S}. 
Let $E(S)$ denote the semilattice of idempotents of $S$. There is a function $p:X\to E(S)$ and a left action $S\times X\to X$ such that the following two conditions hold:
\begin{enumerate}[(E1)]
\item $p(x) \cdot x=x$;
\item $p(s \cdot x)=sp(x)s^{-1}$.
\end{enumerate}
If $p$, above, is surjective we say that the action has {\em global support.}
A {\em morphism} $\varphi \colon (S,X,p)\to (S,Y,q)$ of left \'{e}tale  actions is a map $\varphi \colon X\to Y$ such that $q(\varphi(x))=p(x)$ 
for any $x\in X$ and $\varphi(s\cdot x)=s\cdot \varphi(x)$ for any $s\in S$ and $x\in X$.

We begin this section by explaining how \'etale actions of inverse semigroups are related to actions of
inverse semigroups on presheaves of sets.

Let $S$ be an inverse semigroup with semilattice of idempotents $E(S)$ and let $X \stackrel{p}{\rightarrow} E(S)$
be a presheaf of sets over $E(S)$.
Denote by $S\ast X$ the set of all pairs $(s,x)\in S\times X$ such that $\d(s)=p(x)$. 
We say that there is a {\em left action of $S$ on the presheaf $X$} if for each pair $(s,x)\in S\ast X$ 
there is a unique element $s\cdot x\in X$ such that the following axioms hold:
\begin{enumerate}[({AP}1)]
\item\label{ap1}If $(e,x)\in S\ast X$ with $e\in E(S)$ then $e\cdot x=x$.
\item\label{ap2} If $(s,x)\in S\ast X$ then $p(s\cdot x)= \mathbf{r}(s)$.
\item\label{ap3} Suppose $\d(s)=\r(t)$. Then $(s, t\cdot x)\in S\ast X$ if and only if $(st, x)\in S\ast X$
and in the case when $(s, t\cdot x)\in S\ast X$ we have $s\cdot (t\cdot x)=(st)\cdot x$.
\item\label{ap4}  Let $f\leq \d(s)$. Then the following diagram commutes:
$$
\xymatrix{
X_{\r(s)}\ar[d]^{\vert_{\r(sf)}^{\r(s)}}&X_{\d(s)}\ar[l]_{s\cdot}\ar[d]^{\vert_f^{\mathbf{d}(s)}}\\ 
X_{\r(sf)}&X_f\ar[l]_{sf\cdot}
}
$$
That is we have $(s\cdot x)\vert_{\r(sf)}^{\r(s)}=sf\cdot (x\vert_f^{\d (s)})$ for any $(s,x)\in S\ast X$ and any $f\leq \d(s)$.
\end{enumerate}

Let $(X,p)$ and $(Y,q)$ be presheaves of sets over $E(S)$ and actions of $S$ on these presheaves are given. 
A {\em morphism $\varphi:(X,p)\to (Y,q)$} of left actions of $S$ is defined as a map $\varphi:X\to Y$ such that
\begin{enumerate}
\item $\varphi(X_e)\subseteq Y_e$ for any $e\in E(S)$;
\item If $(s,x)\in S\ast X$ then $s\cdot \varphi(x)=\varphi(s\cdot x)$;
\item If $e\geq f$ and $x\in X_e$ then $\varphi(x)|_f^{e} =\varphi(x|_f^{e})$.
\end{enumerate}

Our first result is an observation that will be important.

\begin{proposition}\label{prop: presheaf_etale} A presheaf of sets $p \colon X \rightarrow E$ over a meet semilattice $E$ is the same thing
as a left \'etale action $E \times X \rightarrow X$ with respect to $p$.
\end{proposition}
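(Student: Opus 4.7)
The plan is to exhibit a direct bijective translation between presheaf structures on $(X,p)$ and left \'etale $E$-action structures on $(X,p)$, and then verify that the two constructions are mutually inverse. The starting point is the observation that $E$, viewed as an inverse semigroup, satisfies $\d(e)=\r(e)=e=e^{-1}$ and $efe^{-1}=e\wedge f$, so an \'etale action of $E$ on $X$ is just a total left action $E\times X\to X$ subject to $p(x)\cdot x=x$ (E1) and $p(e\cdot x)=e\wedge p(x)$ (E2).

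Given a presheaf $p\colon X\to E$, I would define
$$e\cdot x := x\vert^{p(x)}_{e\wedge p(x)}.$$
Axiom (E2) is then immediate from the codomain of the restriction, (E1) reduces to $x\vert^{p(x)}_{p(x)}=x$, and the associativity $(ef)\cdot x=e\cdot(f\cdot x)$ unfolds, via the presheaf composition axiom, to the common value $x\vert^{p(x)}_{e\wedge f\wedge p(x)}$.

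Conversely, from an \'etale action I would define, for $f\leq e$ and $x\in X_e$, the restriction by
$$x\vert^{e}_{f} := f\cdot x.$$
Condition (E2) gives $p(x\vert^{e}_{f})=f\wedge e=f$, so the element lies in $X_f$; (E1) gives $x\vert^{e}_{e}=e\cdot x=p(x)\cdot x=x$; and the composition law $(x\vert^{e}_{f})\vert^{f}_{g}=x\vert^{e}_{g}$ for $g\leq f\leq e$ follows from associativity, since $g\cdot(f\cdot x)=(gf)\cdot x=g\cdot x$.

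To see the two assignments are mutually inverse: starting from a presheaf, forming the action, and then reading off the restriction returns $f\cdot x = x\vert^{e}_{f\wedge e}=x\vert^{e}_{f}$ whenever $x\in X_e$ and $f\leq e$; starting from an \'etale action, forming the presheaf, and then forming the action again returns $(e\wedge p(x))\cdot x=e\cdot(p(x)\cdot x)=e\cdot x$. The argument is essentially a translation exercise, and there is no real obstacle beyond bookkeeping; the one point to be careful about is that the \'etale action is defined on all of $E\times X$, not merely on pairs with $e\leq p(x)$, which is precisely why the definition of $e\cdot x$ from the presheaf must first meet $e$ with $p(x)$ before applying a restriction map.
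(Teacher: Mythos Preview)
Your proof is correct and follows essentially the same route as the paper. The definition of the \'etale action from a presheaf is identical (the paper writes $e\cdot x = x\vert^{p(x)}_{ep(x)}$, which in a semilattice is your $e\wedge p(x)$), and for the converse the paper passes through the right normal band $x\circ y = p(x)\cdot y$ and implicitly invokes Theorem~\ref{th: presheaves_right_normal_bands}, whose restriction maps unwind to exactly your $x\vert^{e}_{f}=f\cdot x$; you simply make this direct and add the explicit check that the two constructions are mutually inverse, which the paper omits.
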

\begin{proof} Let $(X,p)$ be an \'etale act with respect to $E$.
Define $x \circ y = p(x) \cdot y$.
It is easy to check that $(X,\circ)$ is a right normal band.
Observe also that $x \, \mathscr{R} \, y$ if and only if $p(x) = p(y)$.
Conversely, let $X \rightarrow E$ be a presheaf.
Define $e \cdot x = x \vert^{p(x)}_{ep(x)}$.
It is routine to check that we get an \'etale action.
\end{proof}

By the above result, it follows that with every \'etale action $(S,X,p)$ there is
an {\em underlying presheaf} namely $(E(S),X,p)$ where the action $E(S) \times X \rightarrow X$
is defined by restriction.

The following proposition connects left \'{e}tale actions of $S$ and left actions of $S$ on presheaves of sets. 
It can be considered as an action analogue of the Ehresmann-Schein-Nambooripad theorem \cite{L2}.
We don't claim any novelty for it since it seems to be part of the folklore of \'etale actions.

\begin{proposition}\label{prop: etale_actions} Let $S$ be an inverse semigroup.
The category of left \'{e}tale actions of $S$ (with global support) is isomorphic to the category of left actions of $S$ on presheaves 
of sets over $E(S)$ (with global support).
\end{proposition}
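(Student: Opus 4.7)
The plan is to construct explicit mutually inverse functors $F$ and $G$ between the two categories. For $F$ from left étale actions to left actions on presheaves, given $(S,X,p)$, equip $X$ with the underlying presheaf structure over $E(S)$ provided by Proposition~\ref{prop: presheaf_etale} (so the restriction is $x\vert_f^{p(x)}=f\cdot x$ whenever $f\leq p(x)$), and take the partial action $S\ast X\to X$ to be the mere restriction of the given total action. Axioms (AP1) and (AP2) are (E1) and (E2) restricted; (AP3) follows from associativity of the étale action once one notes that $\d(s)=\r(t)$ forces $\d(st)=\d(t)$, making the two side conditions equivalent; and (AP4) reduces to the identity $\r(sf)\cdot(s\cdot x)=(sf)\cdot(f\cdot x)$, both sides of which equal $sf\cdot x$ by associativity together with $\r(sf)\,s=sf$ and $sff=sf$.

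For $G$ in the opposite direction, extend the partial action to a total action on $S\times X$ by the formula
$$s\cdot x\;:=\;(sp(x))\cdot\bigl(x\vert_{p(x)\d(s)}^{p(x)}\bigr),$$
where the outer dot denotes the partial action; this is legal because $\d(sp(x))=p(x)\d(s)$ matches the label of the restricted element, and (E1) and (E2) follow at once from (AP1) and (AP2). The substantive check is associativity $(st)\cdot x=s\cdot(t\cdot x)$: to carry it out, first apply (AP4) to rewrite the inner restriction that arises when $s\cdot$ is applied to $t\cdot x$, then re-associate the partial actions via (AP3); both sides reduce to $((st)p(x))\cdot\bigl(x\vert_{p(x)\d(st)}^{p(x)}\bigr)$ after bookkeeping with commuting idempotents in $E(S)$ and the fact that $\d(st)\leq\d(t)$. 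This is the main technical obstacle of the proof.

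It remains to verify that $GF$ and $FG$ are the identity on objects and morphisms. Applying $G$ after $F$ returns the original étale action because
$$(sp(x))\cdot\bigl(p(x)\d(s)\cdot x\bigr)=\bigl(sp(x)\cdot p(x)\d(s)\bigr)\cdot x=(sp(x))\cdot x=s\cdot(p(x)\cdot x)=s\cdot x,$$
using associativity, the identity $sp(x)\d(s)=sp(x)$ (from $s\d(s)=s$ and commutativity of $E(S)$), and (E1). Applying $F$ after $G$ returns the original partial action because the defining formula collapses to $s\cdot x$ whenever $\d(s)=p(x)$, as then $sp(x)=s$ and the restriction is trivial. For morphisms, any morphism $\varphi$ of étale actions automatically satisfies the three conditions for a morphism of presheaf actions (fibre preservation from $q\varphi=p$, equivariance under the partial action as a special case of full equivariance, and commutation with restrictions from equivariance under the idempotent part of the action), while the formula defining $G$ shows immediately that a morphism of presheaf actions is equivariant for the extended étale action. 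Finally, the global support conditions on the two sides match since both amount to $X_e$ being non-empty for every $e\in E(S)$.
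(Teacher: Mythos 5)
Your proposal is correct and follows essentially the same route as the paper: restricting the total action to $S\ast X$ in one direction, and extending via the identical formula $s\cdot x=(sp(x))\cdot\bigl(x\vert_{p(x)\d(s)}^{p(x)}\bigr)$ in the other. The paper leaves the verifications as "can be verified"; you have supplied them, and your checks of (AP1)--(AP4), associativity, and the two composite identities are sound.
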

\begin{proof} We connect left \'{e}tale actions on $X$ and left actions on presheaves of sets as follows. 
Let $(S,X,p)$ be a left \'{e}tale action of $S$.
By Proposition~\ref{prop: presheaf_etale},
there is an underlying presheaf $(E(S),X,p)$
where $X_e=p^{-1}(e)$ and if $e\geq f$ then the restriction from $X_e$ to $X_f$ is given by $x\vert_f^{e} = f\cdot x$. 
Now the action of $S$ on the presheaf $X\stackrel{p}{\rightarrow} E(S)$ is the given \'{e}tale action: for $(s,x)\in S\ast X$ we put
$sx=s\cdot x$
where we use concatenation to denote the presheaf action.

Conversely, given an action of $S$ on a presheaf $X\stackrel{p}{\rightarrow} E(S)$ we define the action of $S$ on $X$ as follows:
$$
s\cdot x= (sp(x))(x|_{\d(s)p(x)}^{p(x)}).
$$
It can be verified that this is indeed a left \'{e}tale action,  such that $s\cdot x=sx$ if $(s,x)\in S\ast X$. 
Any morphism of left \'{e}tale actions can be regarded as a morphism of the actions of $S$ on presheaves of sets and vice versa. 
The statement now easily follows.
\end{proof}




The goal of the remainder of this section is to define what we mean by free \'etale actions
and to describe them explicitly.
In Proposition~\ref{prop: presheaf_etale}, we showed that with each \'etale action $(S,X,p)$ there is an induced presheaf $(E(S),X,p)$.
We now show that this functor from left \'etale $S$-sets to \'etale left $E(S)$-sets has a left adjoint.

Let $S$ be an inverse semigroup and let $(E(S),A,q)$ be a presheaf of sets 
where we use the identification established in Proposition~\ref{prop: presheaf_etale}.
Put
$$S \ast A = \{ (s,a) \in S \times A \colon \mathbf{d}(s) = q(a)\}$$
which is just a pullback.
Define $r \colon S \ast A \rightarrow E(S)$ by $r(s,a) = \mathbf{r}(s)$
and define $S \times (S \ast A) \rightarrow S \ast A$ by $s \cdot (t,a) = (st,\mathbf{d}(st) \cdot a)$.
This is well-defined because $\mathbf{d}(st) \leq \mathbf{d}(t)$.
 
\begin{proposition}\label{prop: free_etale_sets} 
With the above definitions $(S,S \ast A,r)$ is a left \'etale action
and we have constructed a left adjoint to the forgetful functor above.
\end{proposition}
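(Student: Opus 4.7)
The plan is to verify the \'etale action axioms for $(S, S \ast A, r)$ and then establish the adjunction by exhibiting a unit map and constructing the universal extension of any presheaf morphism.

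First I would verify that the formula $s \cdot (t, a) = (st, \mathbf{d}(st) \cdot a)$ lands in $S \ast A$, which uses $\mathbf{d}(st) \leq \mathbf{d}(t) = q(a)$ together with axiom (AP2) for the underlying presheaf action of $E(S)$ on $A$. Associativity of this $S$-action on $S \ast A$ follows by chaining two applications of the formula and collapsing successive restrictions, since $\mathbf{d}(ss't) \leq \mathbf{d}(s't)$ in the presheaf. The \'etale axiom (E1) reduces to $\mathbf{r}(s) \cdot (s, a) = (s, \mathbf{d}(s) \cdot a) = (s, a)$, using $q(a) = \mathbf{d}(s)$ and (AP1). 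Axiom (E2) is immediate from the inverse semigroup computation $\mathbf{r}(st) = st t^{-1} s^{-1} = s \mathbf{r}(t) s^{-1}$.

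For the adjunction, write $U$ for the forgetful functor from left \'etale $S$-actions to presheaves of sets over $E(S)$, which by Proposition~\ref{prop: presheaf_etale} is the restriction of the action to $E(S)$. I would introduce the unit $\eta_{A} : A \to U(S \ast A)$ defined by $\eta_{A}(a) = (q(a), a)$ and check that it is a morphism of presheaves, which amounts to observing that $r(\eta_{A}(a)) = \mathbf{r}(q(a)) = q(a)$ and that restriction by idempotents commutes with $\eta_{A}$. Given any left \'etale action $(S, Y, q')$ and a presheaf morphism $\varphi : A \to U(Y)$, I would define the candidate extension $\tilde{\varphi} : S \ast A \to Y$ by $\tilde{\varphi}(s, a) = s \cdot \varphi(a)$, using the given \'etale action on $Y$. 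The $S$-equivariance of $\tilde{\varphi}$ follows from associativity of the action on $Y$ together with the compatibility of $\varphi$ with restrictions by idempotents, and the factorization $\tilde{\varphi} \circ \eta_{A} = \varphi$ is immediate from (E1).

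Uniqueness rests on the identity $s \cdot \eta_{A}(a) = (s, a)$ in $S \ast A$, which is a direct calculation using $s \mathbf{d}(s) = s$. This forces any $S$-equivariant extension $\psi$ of $\varphi$ to satisfy $\psi(s, a) = s \cdot \psi(\eta_{A}(a)) = s \cdot \varphi(a) = \tilde{\varphi}(s, a)$. I do not anticipate any substantive obstacle: the individual steps are routine manipulations of inverse semigroup identities and of the presheaf and action axioms. The main conceptual point is to identify the correct unit $\eta_{A}$ and the decomposition $(s, a) = s \cdot \eta_{A}(a)$, after which existence and uniqueness of the universal extension become essentially formal.
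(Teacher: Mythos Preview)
Your proposal is correct and follows essentially the same route as the paper: the same unit $a \mapsto (q(a),a)$, the same extension $\tilde\varphi(s,a)=s\cdot\varphi(a)$, and the same uniqueness argument via $(s,a)=s\cdot(q(a),a)$. If anything, you supply more detail than the paper does (well-definedness of the action, explicit verification that the unit is a presheaf morphism), so there is nothing to add.
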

\begin{proof} To verify that we have an action, we need to check that
$(st) \cdot (u,a) = s \cdot (t \cdot (u,a))$.
But this follows from the definition and the fact that $\mathbf{d}(stu) \leq \mathbf{d}(tu)$.

(E1) holds because $r(t,a) \cdot (t,a) = \mathbf{r}(t) \cdot (t,a) = (t, \mathbf{d}(t) \cdot a) = (t,a)$.

(E2) holds because $\mathbf{r}(st) = s \mathbf{r}(t)s^{-1}$.

Let $(E(S),A,q)$ be a presheaf.
There is a presheaf morphism $\alpha \colon A \rightarrow S \ast A$ given by $a \mapsto (q(a),a)$.
Let $\beta \colon A \rightarrow X$ be a presheaf morphism to the presheaf induced from $(S,X,p)$.
Define $\theta (s,a) = s \cdot \beta (a)$.
Then this is a morphism of \'etale sets and is unique such that $\theta \alpha = \beta$.
\end{proof}

We shall refer to \'etale actions of the form $(S,S \ast A,r)$ as {\em free \'etale sets}.

\section{The structure of right generalized inverse semigroups}

The goal of this section is to prove the following theorem.

\begin{theorem} Let $T$ be a fixed inverse semigroup.
The category of free left \'etale $T$-sets is equivalent to the category of all right generalized inverse 
semigroups $S$ for which there is a surjective homomorphism $\theta \colon S \rightarrow T$ where the kernel of $\theta$ is $\gamma$.
\end{theorem}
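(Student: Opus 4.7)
The plan is to construct functors $F$ (from free \'etale $T$-sets to right generalized inverse semigroups surjecting onto $T$) and $G$ in the reverse direction, and to verify they are mutually quasi-inverse.

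For $F$, given a free left \'etale $T$-set $(T, T\ast A, r)$ arising from a presheaf $q\colon A \to E(T)$, I define a multiplication on $T \ast A$ by
$$
(s,a)(t,b) = (st,\, \mathbf{d}(st)\cdot b),
$$
using the presheaf action of $E(T)$ on $A$ supplied by Proposition~\ref{prop: presheaf_etale}. Associativity follows from $\mathbf{d}(stu)\leq \mathbf{d}(tu)$ and the composition law for restrictions. The idempotents are precisely the pairs $(q(a),a)$, and their product recovers the right normal band structure on $A$ from Theorem~\ref{th: presheaves_right_normal_bands}. For any $b \in A$ with $q(b)=\mathbf{r}(s)$ the pair $(s^{-1},b)$ is an inverse of $(s,a)$, so $T\ast A$ is a right generalized inverse semigroup. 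The projection $\theta\colon(s,a)\mapsto s$ is a surjection onto $T$ whose kernel is exactly $\gamma$, since any two pairs sharing the first coordinate have a common inverse of the form $(s^{-1},b)$.

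For $G$, let $(S,\theta)$ satisfy the hypotheses. The band $B = E(S)$ is right normal, hence $\mathscr{L}$-unipotent, so Proposition~1.1 gives $\gamma|_B = \mathscr{R}$, and Theorem~\ref{th: presheaves_right_normal_bands} makes $B$ into a presheaf over $B/\mathscr{R}$. The hypothesis $\ker\theta = \gamma$ then yields a canonical identification $B/\mathscr{R} \cong E(T)$, so $A := E(S)$ is a presheaf over $E(T)$, and we set $G(S,\theta) := (T, T\ast A, r)$. To link the two constructions I define $\Phi\colon S \to T\ast E(S)$ by $\Phi(s) = (\theta(s), s's)$ with $s' \in V(s)$ arbitrary. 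Independence of the choice of $s'$ uses $\lambda = \gamma \cap \mathscr{L} = \Delta$ on $S$ (Proposition~1.2): for $s',s''\in V(s)$ the elements $s's$ and $s''s$ are both $\gamma$- and $\mathscr{L}$-related, hence equal. The homomorphism property reduces to the identity $(st)'(st) = \mathbf{d}(\theta(st))\cdot (t't)$, which follows from $tt't = t$ and the description in Theorem~\ref{th: presheaves_right_normal_bands} of the presheaf restriction as left multiplication by a representative idempotent. Injectivity of $\Phi$ is again immediate from $\lambda = \Delta$.

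Surjectivity of $\Phi$ is the main obstacle. Given $(u,e)\in T\ast E(S)$, lift $u$ to some $s_0\in S$. Since $\theta(s_0's_0) = \mathbf{d}(u) = \theta(e)$, we have $s_0's_0 \mathrel{\mathscr{R}} e$ in $B$, whence $s_0's_0\cdot e = e$. A short calculation then shows $es_0' \in V(s_0 e)$ and $(es_0')(s_0 e) = e$, so $\Phi(s_0 e) = (u,e)$. Functoriality of $F$ and $G$, and naturality of $\Phi$ together with the analogous isomorphism $F(G(-))\cong \mathrm{id}$, are routine translations using the morphism definitions of Section~3. The critical use of the right normal band structure on $E(S)$ occurs precisely in this surjectivity argument, which is why the hypothesis that $S$ be a right generalized inverse semigroup (and not merely orthodox with $\ker\theta = \gamma$) is essential.
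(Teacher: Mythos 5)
Your proposal is correct and follows essentially the same route as the paper: the multiplication $(s,a)(t,b)=(st,\mathbf{d}(st)\cdot b)$ on $T\ast A$ is the paper's Proposition~\ref{prop: yamada}, and your map $\Phi(s)=(\theta(s),s's)$ is exactly the paper's $\kappa$, with well-definedness, injectivity and surjectivity established by the same computations (the paper phrases injectivity/surjectivity as the statement that $S\to S/\gamma$ is an $\mathscr{L}$-cover, while you invoke $\lambda=\Delta$ from Proposition~1.2 and exhibit the explicit preimage $s_0e$ with inverse $es_0'$, which amounts to the same thing). The only caveat is that, as in the paper's own treatment, the presheaf $A$ must be assumed to have global support for $\theta\colon(s,a)\mapsto s$ to be surjective onto $T$.
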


We show first how to construct a free \'etale set from a semigroup.

\begin{proposition} Let $S$ be a right generalized inverse semigroup such that $T = S/\gamma$.
Regard $S$ as a set and define $T \times S \rightarrow S$ by $[a] \cdot s =  as$.
In addition, define 
$p \colon S \rightarrow E(S/\gamma)$ by $p(s) = [ss']$ where $s' \in V(s)$. 
Then $S$ is a free \'etale set. 
\end{proposition}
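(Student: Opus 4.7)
The plan is to verify that $(T, S, p)$ is a left \'etale action with global support and then identify it with $T \ast E(S)$ for a suitable presheaf structure on $E(S)$, exhibiting it as free in the sense of Proposition~3.4.

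Well-definedness of $p$ is straightforward: two inverses $s_1', s_2' \in V(s)$ yield $\mathscr{R}$-related idempotents $ss_1', ss_2' \in E(S)$, and since the right normal band $E(S)$ is right regular, Proposition~1.1(2) gives $\mathscr{R}|_{E(S)} = \gamma|_{E(S)}$, so $[ss_1'] = [ss_2']$ in $E(T)$. Once the action is well-defined, the remaining axioms are routine: associativity follows from associativity in $S$; (E1) reduces to $ss's = s$; and (E2) uses $s'a' \in V(as)$ to give $p(as) = [ass'a'] = [a]\,p(s)\,[a]^{-1}$. Global support holds because $p(e) = [e]$ for each $e \in E(S)$.

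The main obstacle is showing that the action is well-defined, i.e.\ that $a \gamma b$ implies $as = bs$. The key step is the identity
\[
efs = fes \qquad (e,f \in E(S),\ s \in S),
\]
proved by taking $s' \in V(s)$ and applying the right normal identity $efg = feg$ to the idempotent triple $(e,f,ss')$:
\[
efs = (ef)(ss')s = (fe)(ss')s = fes.
\]
Given $c \in V(a) \cap V(b)$, the idempotents $ac, bc$ are $\gamma$-equivalent and hence $\mathscr{R}$-equivalent in $E(S)$, so $(ac)(bc) = bc$; but direct computation using $cbc = c$ gives $(ac)(bc) = ac$, forcing $ac = bc$. The same argument shows $ca\,\mathscr{R}\,cb$, and the key identity then yields
\[
cbs = (ca)(cb)s = (cb)(ca)s = cbcas = c(as) = cas.
\]
Combining these,
\[
as = a(ca)s = a(cas) = a(cbs) = (acb)s = (bcb)s = bs.
\]

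For freeness I would take $A = E(S)$ equipped with the presheaf structure over $E(T) = E(S)/\mathscr{R}$ from Theorem~2.1, and prove that
\[
\Phi \colon T \ast E(S) \to S, \qquad \Phi([t], e) = te,
\]
is an isomorphism of \'etale $T$-sets. Well-definedness of $\Phi$ on classes uses the action lemma; surjectivity follows from $\Phi([s], s's) = s$. For injectivity, if $t_1 e_1 = t_2 e_2$ with $\d([t_i]) = [e_i]$, then $[t_i] = [t_i e_i]$ forces $[t_1] = [t_2]$, and for any $u \in V(t_1) = V(t_2)$ the $\mathscr{R}$-relation $ut_i \,\mathscr{R}\, e_i$ in the right normal band gives $ut_i e_i = e_i$, whence $e_1 = u t_1 e_1 = u t_2 e_2 = e_2$. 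Equivariance reduces to $(st)(uste) = (stust)e = ste$ for $u \in V(st)$, and the compatibility $p \circ \Phi = r$ follows from $[tet'] = [t]\,\d([t])\,[t]^{-1} = [tt']$.
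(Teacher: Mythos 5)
Your proof is correct, and its overall architecture --- check that the action is well defined, verify the \'etale axioms, then identify $S$ with $T \ast E(S)$ for the presheaf $E(S)$ over $E(S)/\mathscr{R} \cong E(T)$ --- is the same as the paper's; the two main steps, however, are executed differently, and both of your versions are sound. For well-definedness of $[a]\cdot s = as$, you isolate the identity $efs = fes$ for $e,f \in E(S)$ and derive $ac = bc$ and $cas = cbs$ for $c \in V(a) = V(b)$ before assembling $as = bs$; the paper instead computes $a'as = a'bs$ and then finishes via $aa' \, \mathscr{R} \, bb'$ --- the same band identities, packaged differently. For the identification with the free \'etale set, the paper first proves that the quotient map $S \rightarrow S/\gamma$ is an $\mathscr{L}$-cover (injective and surjective on $\mathscr{L}$-classes of idempotents) and deduces that $\kappa(s) = ([s], s's)$ is a bijection, whereas you write down the inverse map $\Phi([t],e) = te$ directly and prove injectivity by the short computation $e_1 = ut_1e_1 = ut_2e_2 = e_2$ with $u \in V(t_1) = V(t_2)$. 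Your route avoids the $\mathscr{L}$-cover detour and is a little more economical; the paper's route establishes the $\mathscr{L}$-cover property as a by-product of independent interest, and its surjectivity argument in fact uses the same element $te$ that underlies your $\Phi$. Both are complete proofs of the proposition.
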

\begin{proof}
We show first that the action is well-defined.
Let $a \, \gamma \, b$.
We prove that $as = bs$.
Let $a' \in V(a)$.
Then $a' \in V(b)$ since $a \, \gamma \, b$.
Thus both $a'a$ and $a'b$ are idempotents.
Observe that $a'a \, \gamma \, a'b$ and so $a'a \, \mathscr{R} \, a'b$ 
since we are working in a right normal band.
Thus $a'a = a'ba'a$.
Hence $a'as = a'ba'as = a'aa'bs = a'bs$ since the idempotents form a right normal band.
Let $b' \in V(b) = V(a)$ and observe that from $a \, \gamma \, b$ we get that $aa' \, \gamma \, bb'$
and so $aa'\, \mathscr{R} \, bb'$.
In particular, $aa'bb' = bb'$.
Thus $as = aa'bs = aa'bb'bs = bb'bs = bs$, as required. 
It is now immediate that $S/\gamma \times S \rightarrow S$ defines an action.

Next we show that the action is \'etale.
The map $p$ is well-defined since if $s'' \in V(s)$ then $ss' \, \mathscr{R} ss''$ and so $[ss'] = [ss'']$.
It is clearly surjective.
We prove that 
$(S/\gamma, S, p)$ is \'etale with global support.
(E1) holds: we have that $p(a) \cdot a = [aa'] \cdot a = aa'a = a$.
(E2) holds: $p([a] \cdot s) = p(as) =  [ass'a'] = [a]p(s)[a]^{-1}$.

It remains to show that the action is free.
To do this we need a presheaf over $E(S/\gamma)$.
The obvious candidate is $E(S)$ itself.
By restriction, we therefore have a presheaf $(E(S/\gamma),S,q)$ where $q(e) = [e]$ the $\mathscr{R}$-class of $e$.
Form the free \'etale set $S/\gamma \ast E(S)$.
Its elements are ordered pairs $([s],e)$ such that $s's \,\mathscr{R}\, e$.

We prove first that there is a bijection between $S$ and the set $S/\gamma \ast E(S)$.
We shall use the following definition.

Let $\theta \colon S \rightarrow T$ be a surjective homomorphism of regular semigroups.
We say that it is an {\em $\mathscr{L}$-cover} if for each idempotent $e \in S$ the map
$(\theta \mid L_{e}) \colon L_{e} \rightarrow L_{\theta (e)}$ is bijective.
We prove that the natural map $S \rightarrow S/\gamma$ is an $\mathscr{L}$-cover.

Suppose first that $s \, \mathscr{L} \, t$ and $\gamma (s) = \gamma (t)$.
Let $s' \in V(s)$.
By assumption, $s' \in V(t)$.
Thus $ts'$ is an idempotent.
Since $\mathscr{L}$ is a right congruence $ss' \, \mathscr{L} \, ts'$.
Thus $ss' = ss'ts'$.
It follows that $ss' = (ss')(ts')(ss')$.
By assumption, $E(S)$ is a right normal band.
Thus $ss' = (ts')(ss')(ss')$.
That is $s = ts's$.
But $s \mathscr{L} t$
and $s's \mathscr{L} s$ so that $s's \mathscr{L} t$.
It follows that $ts's = t$ and so $s = t$, as required.

Next, let $e \in E(S)$ and $\gamma (t) \, \mathscr{L} \, \gamma (e)$. 
Let $t' \in V(t)$.
Then $\gamma (t't) \, \mathscr{L} \, \gamma (e)$.
Therefore $\gamma (t't) = \gamma (e)$ since in an inverse semigroup $\mathscr{L}$-related idempotents are equal.
We now use the observation that in a band
$$i \, \gamma \, j \Leftrightarrow i = iji \mbox{ and } j = jij.$$
It follows that $e = et'te$ and $t't = t'tet't$.
Consider the element $te \in S$.
Then $\gamma (te) = \gamma (t) \gamma (e) = \gamma (t)$.
From $e = (et') te$ it is immediate that $te \mathcal{L} e$.

Define $\kappa \colon S \rightarrow S/\gamma \ast E(S)$ by $\kappa (s) = ([s], s's)$.
This is well-defined and a bijection by what we proved above.
It is routine to check that in this way we have defined an isomorphism of \'etale sets.
\end{proof}

We shall now show how to construct a semigroup from a free \'etale set.

\begin{proposition}\label{prop: yamada}
Let $T$ be an inverse semigroup and $(E(T),X,p)$ a presheaf of sets with global support.
Constuct the free \'etale set
$$S = T \ast X = \{ (t,x) \in T \times X \colon \d (t) = p(x) \}.$$
Define a binary operation on $S$ by
$$(s,x)(t,y) = s \cdot (t,y) = (st, \mathbf{d}(st) \cdot y ).$$
With the above binary operation, $S$ is a right generalized inverse semigroup and $S/\gamma$ is isomorphic to $T$.
\end{proposition}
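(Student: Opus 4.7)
The plan is to verify in order: well-definedness of the product, associativity, regularity, the right-normal-band structure of $E(S)$, and finally that $S/\gamma \cong T$.

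First I would check that the product lands in $S$: for $(s,x)(t,y) = (st, \d(st)\cdot y)$, one needs $p(\d(st)\cdot y) = \d(st)$, which follows since $\d(st) \leq \d(t) = p(y)$ and the presheaf action satisfies $p(e\cdot y) = ep(y)$. Associativity reduces to showing that both $((r,x)(s,y))(t,z)$ and $(r,x)((s,y)(t,z))$ equal $(rst, \d(rst)\cdot z)$; the only nontrivial point is the identity $\d(rst)\cdot(\d(st)\cdot z) = \d(rst)\cdot z$, which is immediate from $\d(rst)\leq \d(st)$ together with composition of restrictions (equivalently, from axiom (AP4) or Proposition \ref{prop: presheaf_etale}).

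For regularity, I would note that $(s,x)(t,y)(s,x) = (sts, \d(sts)\cdot x)$, so that a candidate inverse must satisfy $sts = s$; and when this holds, $\d(sts)\cdot x = p(x)\cdot x = x$. A symmetric computation gives $(t,y)(s,x)(t,y) = (t,y)$ whenever $tst = t$. Hence for each $(s,x) \in S$, every pair $(s^{-1}, y)$ with $p(y) = \r(s)$ is an inverse, and the global support of the presheaf ensures such $y$ exists. Crucially,
$$V(s,x) = \{(s^{-1}, y) : p(y) = \r(s)\},$$
which depends only on $s$.

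Next I would identify the idempotents: $(e,x)^2 = (e,x)$ forces $e^2 = e$ in $T$ and $p(x) = e$. For three such idempotents, using that $ef = fe$ in the semilattice $E(T)$,
$$(e,x)(f,y)(g,z) = (efg, efg \cdot z) = (feg, feg\cdot z) = (f,y)(e,x)(g,z),$$
so $E(S)$ satisfies the right-normal identity $uvw = vuw$. Together with regularity, this makes $S$ a right generalized inverse semigroup.

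Finally, the projection $\pi \colon S \to T$, $\pi(s,x) = s$, is a surjective homomorphism onto the inverse semigroup $T$, so $\gamma \subseteq \ker\pi$. Conversely, if $(s,x)\,\gamma\,(s',x')$ then they share an inverse, and the description of $V(s,x)$ above forces $s = s'$; hence $\ker\pi \subseteq \gamma$, and $S/\gamma \cong T$. The main obstacle in the whole argument is bookkeeping with the restriction/action notation, but every step is forced by (AP1)--(AP4) and the fact that $E(T)$ is commutative.
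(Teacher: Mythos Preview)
Your proof is correct and follows essentially the same approach as the paper's (very terse) argument: identify the idempotents as the pairs $(p(x),x)$, describe $V(s,x)$ as all $(s^{-1},y)$, and read off both the right-normal band structure on $E(S)$ and the equality $\gamma=\ker\pi$ from these descriptions. The only cosmetic difference is that the paper phrases the idempotent computation as ``the multiplication of idempotents is isomorphic to the right normal band structure on $X$'' (via Theorem~\ref{th: presheaves_right_normal_bands}), whereas you verify the identity $uvw=vuw$ directly; these amount to the same thing.
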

\begin{proof} The proof of associativity is routine;
idempotents have the form $(p(x),x)$ and the multiplication of idempotents is isomorphic to
the right normal band structure on $X$;
the inverses of $(s,x)$ are all elements  of the form $(s^{-1},y)$;
and $(s,x) \, \gamma \, (t,y)$ if and only if $s = t$.
\end{proof}

It remains to show that the two functors we have defined form part of an equivalence of categories.
We define a {\em right Yamada semigroup} to be a semigroup constructed from a free \'etale set in accordance with
Proposition~\ref{prop: yamada}.

\begin{theorem}\label{th: structure_of_right_generalized_inverse_semigroups} 
Every right generalized inverse semigroup is isomorphic to a right Yamada semigroup.
\end{theorem}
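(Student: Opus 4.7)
The plan is to combine the previous proposition with Proposition~\ref{prop: yamada}. Given a right generalized inverse semigroup $S$, set $T = S/\gamma$ (an inverse semigroup) and view $E(S)$ as a presheaf of sets over $E(T) = E(S)/\mathscr{R}$ via Theorem~\ref{th: presheaves_right_normal_bands}. The previous proposition has already established that the map $\kappa \colon S \to T \ast E(S)$, $s \mapsto ([s], s's)$, is a bijection and an isomorphism of left \'etale sets. Endowing the codomain with the Yamada product $(u,x)(v,y) = (uv,\, \d(uv) \cdot y)$ from Proposition~\ref{prop: yamada}, it remains only to check that $\kappa$ respects multiplication.

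For this, fix $s, t \in S$ together with inverses $s' \in V(s)$ and $t' \in V(t)$. Since $S$ is orthodox, $t's' \in V(st)$, so in $T$ we have $\d([st]) = [t's'][st] = [t's'st]$. Under the identification of the presheaf and right normal band structures on $E(S)$ recalled in Theorem~\ref{th: presheaves_right_normal_bands}, the action of an idempotent class $[e] \in E(T)$ on $f \in E(S)$ is realised by left multiplication $e_0 f$ for any representative $e_0 \in [e]$; well-definedness rests on the right normal band identity $efg = feg$. Taking $e_0 = t's'st$ and using the inverse identity $tt't = t$,
$$\d([st]) \cdot (t't) = (t's'st)(t't) = (t's's)(tt't) = t's'st.$$
Therefore
$$\kappa(s)\kappa(t) = \bigl([st],\, t's'st\bigr) = \bigl([st],\, (st)'(st)\bigr) = \kappa(st),$$
where the middle equality uses the admissible choice $(st)' = t's'$ in $V(st)$. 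Hence $\kappa$ is a semigroup homomorphism and, being already a bijection, a semigroup isomorphism.

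The main obstacle is not the algebra itself but keeping the two viewpoints of the presheaf---as an order-theoretic datum with restriction maps and as an algebraic left-multiplication action on $E(S)$---properly aligned with the Yamada product formula, and checking that ambiguity in the choice of inverse $s'$ is absorbed correctly into the second coordinate. Once the identifications are matched, the verification collapses to a short manipulation relying on orthodoxy ($V(s)V(t) \subseteq V(st)$), the right normal band identities on $E(S)$, and the elementary identity $tt't = t$.
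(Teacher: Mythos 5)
Your proposal is correct and follows exactly the paper's route: the paper's proof likewise invokes the previously constructed bijection $\kappa(s) = ([s], s's)$ and reduces the theorem to checking that $\kappa$ is a homomorphism, a verification it declares ``straightforward'' and which you have simply carried out in detail (correctly handling the choice of inverse via $t's' \in V(st)$ and the right normal band identity). No gaps; your explicit computation is a useful expansion of what the paper leaves to the reader.
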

\begin{proof} Let $S$ be a right generalized inverse semigroup.
We have already defined a bijection $\kappa \colon S \rightarrow S/\gamma \ast E(S)$.
It just remains to prove that it is a homomorphism.
We have that 
$\kappa (s) = ([s],s's)$, 
$\kappa (t) = ([t],t't)$ 
and
$\kappa (st) = ([st],(st)'st)$. 
It is straightforward to check that $\kappa (st) = \kappa (s)\kappa (t)$.
\end{proof}

Finally, we need to go in the other direction.
The proof of the following is also immediate.

\begin{proposition} 
Let $(T,T \ast X,p)$ be a free \'etale set.
Then the free \'etale set constructed from the associated right generalized inverse semigroup
is isomorphic to $(T,T \ast X,p)$.
\end{proposition}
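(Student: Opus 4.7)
The plan is to establish two natural identifications — $S/\gamma \cong T$ and $E(S) \cong X$ — and then observe that, under them, the pullback defining the free \'etale set on the right side coincides with $T \ast X$ together with its $T$-action.

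For the first identification, Proposition~\ref{prop: yamada} tells us that $(s,x)\,\gamma\,(t,y)$ in $S = T \ast X$ if and only if $s = t$, so sending $[(s,x)]_\gamma$ to $s$ gives an isomorphism $\sigma \colon S/\gamma \to T$. For the second, the same proposition identifies the idempotents of $S$ as the elements $(p(x),x)$ with multiplication mirroring the right normal band structure on $X$; hence $\tau \colon X \to E(S)$, $x \mapsto (p(x),x)$, is an isomorphism of right normal bands, and therefore an isomorphism of presheaves over $E(T) \cong E(S/\gamma)$ by Theorem~\ref{th: presheaves_right_normal_bands}. Two idempotents of $S$ are $\gamma$-related precisely when their first coordinates in $T$ coincide, so $\sigma$ and $\tau$ are compatible in the sense that the structure map $q \colon E(S) \to E(S/\gamma)$ of the presheaf $E(S)$ satisfies $\sigma \circ q \circ \tau = p$. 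Consequently
$$S/\gamma \ast E(S) = \{([u]_\gamma, e) : \d([u]_\gamma) = q(e)\} \;\longleftrightarrow\; \{(t,x) \in T \times X : \d(t) = p(x)\} = T \ast X$$
is a bijection intertwining the structure maps to $E(T)$.

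It then remains to verify that this bijection intertwines the two $T$-actions: on the left one has $[u]_\gamma \cdot ([v]_\gamma, e) = ([uv]_\gamma, \d([uv]_\gamma) \cdot e)$, where the last dot is the presheaf action of an idempotent on $E(S)$, while on the right one has $u \cdot (v,x) = (uv, \d(uv) \cdot x)$. These match as soon as one knows that $\tau$ commutes with the presheaf restrictions, a fact that is built into $\tau$ being a band isomorphism through the equivalence in Theorem~\ref{th: presheaves_right_normal_bands}. The argument is thus essentially bookkeeping, and the only obstacle is keeping track simultaneously of the three pieces of structure — the $\gamma$-quotient, the band of idempotents, and the pullback action — being transported between the two descriptions.
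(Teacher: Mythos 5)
Your argument is correct. The paper itself offers no proof here --- it simply declares the result ``immediate'' --- so there is nothing to match against; what you have written is a legitimate way of filling in the details. Your route goes through the bijection $\kappa \colon S \rightarrow S/\gamma \ast E(S)$ of the earlier propositions, identifying $S/\gamma$ with $T$ and $E(S)$ with $X$ (as right normal bands, hence as presheaves via Theorem~\ref{th: presheaves_right_normal_bands}), and then checking that the pullbacks and actions correspond; all the individual identifications you invoke do follow from Proposition~\ref{prop: yamada}, and the compatibility $\sigma \circ q \circ \tau = p$ is exactly the point that makes the two pullbacks match. It is worth noting that there is an even shorter path, which is presumably what the authors had in mind: the free \'etale set constructed from $S = T \ast X$ has underlying set $S$ itself, with $T \cong S/\gamma$ acting by left multiplication $[(t,z)] \cdot (u,x) = (tu, \mathbf{d}(tu)\cdot x)$ and with structure map $(u,x) \mapsto [(u,x)(u,x)'] \mapsto \mathbf{r}(u)$; under the identification $S/\gamma \cong T$ this is \emph{literally} the original action and structure map of $(T, T\ast X, p)$, so no intermediate isomorphisms are needed. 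Your version buys a little more: it exhibits the isomorphism in the normal form $S/\gamma \ast E(S)$, which is the shape in which free \'etale sets were defined, at the cost of having to track the three identifications simultaneously.
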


\section{The structure of generalized inverse semigroups}

In this section, we use the theory developed in this paper together with some Morita theory to explain  
Yamada's main structure theorem for generalized inverse semigroups  \cite[Theorem 2]{Y1}.
We start with a description of his theorem in our terms.

Let $T$ be an inverse semigroup and let $(X,p)$ and $(Y,q)$ be two presheaves of sets over $E(T)$ with global support.
The former will be regarded as a left normal band and the latter as a right normal band.
Both will be handled using the approach of Proposition~3.1.
Given this data, put
$$\mathscr{Y}(X,T,Y)=\{(x,t,y)\in X \times T \times Y \colon p(x) = \mathbf{r}(t) \text{ and } q(y) = \mathbf{d}(t) \}.
$$
Define a binary operation on $\mathscr{Y}(X,T,Y)$ by
$$
(x,s,y)(u,t,v)=(x \cdot \mathbf{r}(st),st, \mathbf{d}(st) \cdot v ).
$$
Yamada's theorem is that $\mathscr{Y}(X,T,Y)$ is a generalized inverse semigroup and every generalized inverse semigroup is of this form.
We will refer to the semigroup $\mathscr{Y}(X,T,Y)$ as a {\em Yamada semigroup}.  

Let $(x,s,y) \in \mathscr{Y}(X,T,Y)$.
Then
$$V((x,t,y)) = \{(u,t^{-1},v) \colon p(u) = \mathbf{d}(t) \mbox{ and } q(v) = \mathbf{r}(t) \},$$
and is non-empty since both our presheaves are assumed to have global support.
It follows that 
$$(x,t,y) \, \gamma \, (u,s,v) \Leftrightarrow t = s.$$ 
The idempotents in $\mathscr{Y}(X,T,Y)$ are those elements of the form $(x,e,y)$ where $e$ is an idempotent.
A simple calculation shows that
$$(x,s,y) \mathscr{L} (u,t,v) \Leftrightarrow s \mathscr{L} t \mbox{ and } y = v.$$
A dual result holds for $\mathscr{R}$.
It follows that
$$(x,t,y) \, \lambda \, (u,s,v) \Leftrightarrow t = s \mbox{ and } y = v.$$ 
We may therefore identify the right generalized inverse semigroup
$\mathscr{Y}(X,T,Y)/\lambda$ with the set of pairs $(t,y)$ where $q(y) = \mathbf{d}(t)$,
and a multiplication given by $(s,y)(t,v) = (st, \mathbf{d}(v))$.
A dual result holds for $\rho$.

The Morita theory of regular semigroups is due to \cite{T1,T2} and was developed in \cite{L4}.
Let $S$ be an orthodox semigroup with minimum inverse congruence $\gamma$.
McAlister proved \cite[Proposition 1.4]{DM} that the natural homomorphism $S \rightarrow S/\gamma$ is a local isomorphism
if and only if $S$ is generalized inverse.
From the theory developed in \cite{L4}, it follows that a generalized inverse semigroup $S$ is Morita equivalent to the inverse semigroup $S/\gamma$.
We refer the reader to Talwar's paper \cite{T2} for the Morita theory we use here, as well as \cite{H2} for the theory of tensor products.
One definition from the theory of tensor products will be needed below.
Let $X$ be a right $S$-set and $Y$ be a left $S$-set.
Then a function $f \colon X \times Y \rightarrow Z$, to a set $Z$,
is said to be {\em balanced} if $f(x \cdot s,y) = f(x,s \cdot y)$ for all $x \in X$, $y \in Y$ and $s \in S$.

Let $S$ be a generalized inverse semigroup.
We construct right and left generalized inverse semigroups $S_{R} = S/\lambda$ and $S_{L} = S/\rho$ respectively.
By the theory developed in the previous section and its dual,
the semigroup $S_{R}$ is a free left \'etale $S/\gamma$-set and $S_{L}$ is a free right \'etale $S/\gamma$-set.
Regarding both as simply $S$-sets,
we may therefore form their tensor product $S_{L} \otimes S_{R}$.

We now describe a general construction.
Let $P$ be a left $R$-set and $Q$ a right $R$-set.
Suppose that there is an $(R,R)$-bilinear function $\langle,\rangle \colon P \times Q \rightarrow R$,
meaning that $\langle rp, q \rangle = r \langle p, q \rangle$ and $\langle p, qs \rangle = \langle p, q \rangle s$
for all $r,s \in R$.
Then the tensor product $Q \otimes P$ becomes a semigroup when we define
$$(q \otimes p)(q' \otimes p') = q \otimes \langle p, q' \rangle p'.$$

Just such a bilinear map can be defined in our case.
We write $[s]$ to mean the $\gamma$-class of $s$,
$[e]_{L}$ to mean the $\mathscr{L}$-class of the idempotent $e$ in the band of idempotents,
and 
$[e]_{R}$ to mean the $\mathscr{R}$-class of the idempotent $e$ in the band of idempotents.
Define $\langle, \rangle \colon S_{R} \times S_{L} \rightarrow S/\gamma$ by
$$\langle ([s], [e]_{L}), ([f]_{R}, [t]) \rangle = [st].$$
This is a bilinear map.
It follows that there is a semigroup product defined on $S_{L} \otimes S_{R}$.
The semigroup  $S_{L} \otimes S_{R}$ is Morita equivalent to the inverse semigroup $S/\gamma$.
It is called the {\em Morita semigroup over $S/\gamma$ defined by $\langle,\rangle$.}

\begin{theorem} 
Yamada semigroups are isomorphic to Morita semigroups. 
\end{theorem}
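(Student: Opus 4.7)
The plan is to exhibit an explicit isomorphism between a Yamada semigroup $\mathscr{Y}(X,T,Y)$ and the Morita semigroup constructed from it by the recipe above.

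First I would specialize the Morita construction to $S = \mathscr{Y}(X,T,Y)$. Using the explicit descriptions of $\gamma$, $\lambda$ and $\rho$ listed just before the theorem, I obtain $S/\gamma \cong T$, while $S_R = S/\lambda$ identifies with the free left \'etale $T$-set $T\ast Y$ of Proposition~\ref{prop: free_etale_sets}, and dually $S_L = S/\rho$ identifies with the free right \'etale $T$-set $X\ast T$. Under these identifications the bilinear pairing takes the form $\langle(s,y),(x,t)\rangle = st$, so the Morita semigroup becomes $(X\ast T)\otimes_T(T\ast Y)$ with multiplication $(q\otimes p)(q'\otimes p') = q\otimes \langle p,q'\rangle p'$.

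Next I would define
$$\Phi\colon \mathscr{Y}(X,T,Y)\longrightarrow (X\ast T)\otimes_T(T\ast Y),\qquad \Phi(x,t,y) = (x,\mathbf{r}(t))\otimes(t,y).$$
Well-definedness is immediate from $p(x)=\mathbf{r}(t)$ and $q(y)=\mathbf{d}(t)$. For the homomorphism property, expanding $\Phi(x_1,s,y_1)\Phi(x_2,t,y_2)$ via the tensor multiplication formula and using $s\mathbf{r}(t)t = st$ reduces the product to $(x_1,\mathbf{r}(s))\otimes(st,\mathbf{d}(st)\cdot y_2)$; a single balance move with $a=\mathbf{r}(st)$ then transforms this into $(x_1\cdot\mathbf{r}(st),\mathbf{r}(st))\otimes(st,\mathbf{d}(st)\cdot y_2)$, which is $\Phi$ applied to the Yamada product.

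The bulk of the proof, and where I expect the main obstacle, is bijectivity of $\Phi$. The key step is to establish a canonical form: every element of $(X\ast T)\otimes_T(T\ast Y)$ is equivalent to a unique element of the form $(x,\mathbf{r}(u))\otimes(u,y)$ with $p(x)=\mathbf{r}(u)$ and $q(y)=\mathbf{d}(u)$. From a generic representative $(x,s)\otimes(s',y)$ I would reach this form in two balance moves: first, writing $(x,s) = (x,\mathbf{r}(s))\cdot s$ balances to $(x,\mathbf{r}(s))\otimes(ss',\mathbf{d}(ss')\cdot y)$; then transferring the idempotent $\mathbf{r}(ss')$ to the left factor yields $(x\cdot\mathbf{r}(ss'),\mathbf{r}(ss'))\otimes(ss',\mathbf{d}(ss')\cdot y)$. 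To confirm the reduction descends to equivalence classes, I would verify invariance under a single generating balance move $(x,s)\cdot a\otimes(s',y) = (x,s)\otimes a\cdot(s',y)$: both sides yield a canonical form with middle coordinate $sas'$, and the outer coordinates are then forced by the presheaf action axioms. The morally correct reason this works is that, because $X\ast T$ and $T\ast Y$ are \emph{free} \'etale sets, the tensor product collapses to $X\otimes_{E(T)}T\otimes_{E(T)}Y$, which is precisely $\mathscr{Y}(X,T,Y)$ as a set; the delicate point is making this rigorous in the purely set-theoretic, rather than module-theoretic, setting of inverse semigroup tensor products.
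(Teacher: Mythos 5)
Your proof is correct, and while the setup (the identification $S_R\cong T\ast Y$, $S_L\cong X\ast T$, the map $(x,t,y)\mapsto x\mathbf{r}(t)\otimes ty$, and the normalization computation establishing surjectivity and the homomorphism property) coincides with the paper's, you handle the crucial injectivity step by a genuinely different and arguably cleaner argument. The paper attacks the generated equivalence relation head-on: it considers an alternating chain of ``left moves'' and ``right moves'' connecting two normalized tensors, extracts the zig-zag inequalities $x\geq x_1\leq x_2\geq\cdots\leq u$ together with $p(x)\leq p(x_i)$, and runs an induction using the order-theoretic Corollary~\ref{cor: order} to force $x=u$ and $y=v$. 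You instead define a normal form $N\bigl((x,s),(s',y)\bigr)=(x\cdot\mathbf{r}(ss'),\,ss',\,\mathbf{d}(ss')\cdot y)$ and verify it is unchanged by a single generating balance move $((x,s)\cdot a,(s',y))\sim((x,s),a\cdot(s',y))$ --- both sides reduce to middle coordinate $sas'$ and the outer coordinates agree by transitivity of the restriction maps --- so $N$ descends to the quotient and is a two-sided inverse to $\Phi$ on normalized representatives. This is the standard ``well-defined invariant on generators'' technique; it avoids the chain analysis and Corollary~\ref{cor: order} entirely, at the cost of having to compute the normal form of an arbitrary (non-normalized) representative, which you do correctly. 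One presentational point: make sure you state explicitly that the tensor product is the quotient by the equivalence relation \emph{generated} by the balance relation, since that is exactly what licenses checking $N$ on single moves only.
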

\begin{proof} Let $S$ be a generalized inverse semigroup.
We shall prove that $S_{L} \otimes S_{R}$ is isomorphic to the Yamada semigroup isomorphic to $S$.
In what follows, we may therefore assume that $S$ is given as a Yamada semigroup.
To simplify notation we shall write $xsy$ for a typical element of $\mathscr{Y} (X,T,Y)$.
The semigroups $S_{L}$ and $S_{R}$ have concrete descriptions in terms of this notation.
The semigroup $S_{L}$ has elements $xs$ and the semigroup $S_{R}$ has elements $ty$.

A typical element of  $S_{L} \otimes S_{R}$ therefore has the form $xs \otimes ty$.
Using the properties of tensor products, we have that
$$xs \otimes ty 
= x \r (s) \otimes s \cdot (ty) 
= x \r (s) \otimes st(\d (st) \cdot y)
= (x \cdot \r(st)) \r (st) \otimes st (\d (st) \cdot y)$$
It follows that each element of $S_{L} \otimes S_{R}$
can be written in the form 
$$x \r (s) \otimes sy$$
where $p(x) = \r(s)$ and $q(y) = \d (s)$.
We shall call such elements {\em normalized}.
We now calculate the product of two normalized elements
$$(x \r(s) \otimes sy)(u \r(t) \otimes tv).$$
This is equal to
$$x \r (s) \otimes s \r(t) \cdot (tv),$$
which normalizes to
$$(x \cdot \mathbf{r}(st)) \r (st) \otimes st (\mathbf{d}(st) \cdot v).$$ 
Define the function 
$\theta \colon \mathscr{Y}(X,T,Y) \rightarrow S_{L} \otimes S_{R}$
by 
$\theta (x,s,y) = x \r(s) \otimes sy$.
We have proved so far that this is a surjective homomorphism.
It remains to show that this homomorphism is injective.
This is the same as showing that two normalized elements of $S_{L} \otimes S_{R}$
are equal precisely when they `look equal'.
Observe that the map $S_{R} \times S_{L} \rightarrow S$
defined by $(xs,ty) \mapsto st$ is balanced.
It follows that if $x \r(s) \otimes sy = u \r(t) \otimes tv$ then $s = t$.
Hence $p(x) = p(u) = \r (s)$ and $q(y) = q(v) = \d (t)$.
Therefore, we suppose that we have two normalized elements such that
$x \r(s) \otimes sy = u \r(s) \otimes sv$.

In order to show that two tensors are equal, we apply a succession of {\em left moves} and {\em right moves}.
A left move has the form $(xa,by) \rightarrow (x_{1}a_{1},b_{1}y_{1})$.
This has the following properties:
$x_{1} \leq x$, $y \leq y_{1}$, $ab = a_{1}b_{1}$ and, of course, $p(x_{1}) = \mathbf{r}(a_{1})$
and $q(y_{1}) = \mathbf{d}(b_{1})$.
A right move has the same form and the same properties except that $x \leq x_{1}$, $y_{1}\leq y$.
We denote a finite sequence of left and right moves by $\stackrel{\ast}{\rightarrow}$.
Observe that we may assume that we begin with a left move and end with a right move
because there are trivial left and right moves that do not change the ordered pair.
We may also assume that left and right moves alternate.

Suppose therefore that $(x\mathbf{r}(s),sy) \stackrel{\ast}{\rightarrow} (u\mathbf{r}(s),sv)$.
There are elements $a_{1}, \ldots, a_{n}$ and $b_{1}, \ldots, b_{n}$ in $S$ such that
$s = a_{1}b_{1} = a_{2}b_{2} = \ldots = a_{n}b_{n} = s$
and elements $x_{1}, \ldots, x_{n} \in X$ and $y_{1}, \ldots, y_{n} \in Y$
such that $x \geq x_{1} \leq x_{2} \geq x_{3} \leq \ldots, \geq x_{n} \leq u$
and
$y \leq y_{1} \geq y_{2} \leq y_{3} \geq \ldots \leq y_{n} \geq v$.
In addition, $p(x_{i}) = \mathbf{r}(a_{i})$ and $q(y_{i}) = \mathbf{d}(b_{i})$.
By assumption, $p(x) = p(u)$ and $q(y) = q(v)$.
Our goal is to prove that $x = u$ and $y = v$.
Observe that $s = a_{i}b_{i}$.
It follows that $\mathbf{r}(s) \leq \mathbf{r}(a_{i})$
and $\mathbf{d}(s) \leq \mathbf{d}(b_{i})$.
Hence $p(x) \leq p(x_{i})$ and $q(y) \leq q(y_{i})$ for all $i$.
Given the symmetry of the situation, we need only prove explicitly that $x = u$.

The first step is easy.
Since $x_{1} \leq x$ and $p(x) \leq p(x_{1})$ we have that $x = x_{1}$.
Assume that $x \leq x_{1} \leq \ldots \leq x_{i}$.
We shall prove that $x \leq x_{i+1}$.
If $x = x_{i} \leq x_{i+1}$ already, then there is nothing to prove.
If, on the other hand, $x_{i+1} \leq x_{i} = x$ then we have $p(x) = p(x_{i+1})$ and so $x = x_{i+1}$.
It follows that $x \leq u$.
But by assumption, $p(x) = p(u)$ and so $x = u$, as required.
\end{proof}


\end{document}